\title{Chromatic-choosability of hypergraphs with high chromatic number\thanks{Supported by the
National Natural Science Foundation of China under Grant No.\,11471273 and 11561058.}}
\author
{
Wei Wang$^{\rm a,b}$,
Jianguo Qian$^{\rm a}$\thanks{Corresponding author: jgqian@xmu.edu.cn.}
\\
{\footnotesize$^{\rm a}$School of Mathematical Sciences, Xiamen University, Xiamen 361005, P. R. China}\\
{\footnotesize$^{\rm b}$College of Information Engineering, Tarim University, Alar 843300, P. R. China}
}
\date{}
\begin{document}
\maketitle
\newtheorem{lem}{Lemma}[section]
\newtheorem{thm}[lem]{Theorem}
\newtheorem{prop}[lem]{Proposition}
\newtheorem{cor}[lem]{Corollary}
\newtheorem{conjecture}[lem]{Conjecture}
\newtheorem*{pf}{Proof}
\begin{abstract}
It was conjectured by Ohba and confirmed recently by Noel et al. that, for any graph $G$, if $|V(G)|\le 2\chi(G)+1$ then  $\chi_l(G)=\chi(G)$. This indicates that the graphs  with high chromatic number are chromatic-choosable. We show that this is also the case for uniform hypergraphs and further propose a generalized version of Ohba's conjecture: for any $r$-uniform hypergraph $H$ with $r\geq 2$, if $|V(H)|\le r\chi(H)+r-1$ then  $\chi_l(H)=\chi(H)$. We show that the condition of the proposed conjecture is sharp by giving two classes of $r$-uniform hypergraphs $H$ with $|V(H)|= r\chi(H)+r$ and $\chi_l(H)>\chi(H)$. To support the conjecture, we give two classes of  $r$-uniform hypergraphs $H$ with $|V(H)|= r\chi(H)+r-1$ and prove that $\chi_l(H)=\chi(H)$.
\end{abstract}
\noindent\textbf{Key words.} uniform hypergraph; list coloring;  chromatic-choosability

\noindent\textbf{AMS subject classification.} 05C15

\section{Introduction}

For a graph or a hypergraph $G$, a  vertex coloring of $G$ is \emph{proper} if every edge contains a pair of vertices with different colors. For a positive integer $k$, a $k$-\emph{list assignment} of $G$ is a mapping $L$ which assigns to each vertex $v$ a set $L(v)$ of $k$ permissible colors.  Given a $k$-list assignment $L$, an \emph{$L$-coloring} of $G$ is a proper vertex coloring in which the color of every vertex $v$ is chosen from its list $L(v)$.  We say that $G$ is $L$-\emph{colorable} if $G$ has an $L$-coloring. A graph $G$ is called $k$-\emph{choosable} if for any $k$-list assignment $L$, $G$ is $L$-colorable. The \emph{list chromatic number} (or \emph{choice number}) $\chi_l(G)$ is the minimum $k$ for which $G$ is $k$-choosable. It is obvious that $\chi_l(G)\ge \chi(G)$, where $\chi(G)$ is the chromatic number of $G$. A graph $G$ is \emph{chromatic-choosable} if $\chi_l(G)= \chi(G)$. The notion of list coloring  was introduced independently by Vizing \cite{vizing1976} and by Erd\H{o}s, Rubin and Taylor \cite{erdos1979} initially for  ordinary graphs and then was extended to hypergraphs \cite{Alon2,Benzaken,Kahn,Ramamurthi,Saxton1,Saxton2}.

The list coloring for graphs has been extensively studied, much of the earlier fundamental work on which was surveyed in Alon \cite{Alon1}, Tuza  \cite{Tuza} and Kratochv\'{\i}l-Tuza-Voigt \cite{Kratochvil}. One direction of interests on list coloring focused on the estimation or asymptotic behaviour of the list chromatic number $\chi_l(G)$ compared to the degree of the vertices. In \cite{erdos1979}, Erd\H{o}s, Rubin and Taylor proved that the list chromatic number of the complete bipartite graph $K_{d,d}$ grows as binary logarithm of $d$ (the degree of $K_{d,d}$). More in general,  Alon \cite{Alon1} showed that the list chromatic number of any graph grows with the average degree. However, this is not the case for hypergraphs. It was shown that, when $r\geq 3$, it is not true in general that the list chromatic number of  $r$-uniform hypergraphs grows with its average degree \cite{Alon2}.  Even so, it was also shown that similar property holds for many classes of hypergraphs \cite{Alon2,Haxell,Saxton2}, including all the simple uniform hypergraphs (here, a hypergraph is \emph{simple} if different edges have at most one vertex in common) \cite{Saxton1}.

Another direction of interests on list coloring focused on the difference between the chromatic number $\chi(G)$ and list chromatic number $\chi_l(G)$. It was shown that  $\chi_l(G)$ can be much larger than  $\chi(G)$ for both the ordinary graphs \cite{erdos1979} and hypergraphs \cite{Haxell}. This yields a natural question: which graphs are chromatic-choosable?  A well known example concerning this question is the List Coloring Conjecture (attributed in particular to Vizing, see \cite{Haggkvist}), which says that every line-graph is chromatic-choosable. This conjecture was later extended to claw-free graphs \cite{gravier1}.

In addition to particular classes of the graphs that might be chromatic-choosable, the graphs with `high chromatic number' (compared to the number of the vertices in the graph) also received much attention. A trivial fact is that every complete graph is chromatic-choosable. In  \cite{ohba2002}, Ohba showed that, for any graph $G$, if $|V(G)|\le \chi(G)+\sqrt{2\chi(G)}$ then $\chi_l(G)=\chi(G)$. Further, in the same paper, Ohba  conjectured that if $|V(G)|\le 2\chi(G)+1$ then  $\chi_l(G)=\chi(G)$. This conjecture was recently confirmed by Noel, Reed and Wu \cite{noel2015}.

In this paper we focus on the chromatic-choosability of the uniform hypergraphs with high chromatic number, where the notion of uniform means that every edge consists of the same number of vertices. We show that the uni-form hypergraphs $H$ with high chromatic number are chromatic-choosable, that is, if $|V(H)|\le (r-\frac{1}{2})\chi(H)+\frac{r}{2}-1$ then $\chi_l(H)=\chi(H).$ Further, inspired by a recent Ohba-like conjecture for $d$-improper colorings given by Yan et al. \cite{yan2015} (See Conjecture \ref{imohba} below), we propose the following generalized version of Ohba's conjecture on $r$-uniform hypergraphs for any $r\geq 2$.
\begin{conjecture}\label{ourmain}
Let $r\ge 2$ and $H$ be an $r$-uniform hypergraph. If
$$|V(H)|\le r\chi(H)+r-1$$
then  $\chi_l(H)=\chi(H)$.
\end{conjecture}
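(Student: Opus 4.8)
The plan is to first reduce Conjecture~\ref{ourmain} to the case of ``complete'' $r$-uniform multipartite hypergraphs, and then to attack that case by an Ohba-type inductive analysis. For the reduction, start from an arbitrary $r$-uniform $H$ with $\chi(H)=k$ and fix a proper $k$-colouring, which partitions $V(H)$ into colour classes $V_1,\dots,V_k$. Let $H^\ast$ be the $r$-uniform hypergraph on the same vertex set whose edges are \emph{all} $r$-subsets of $V(H)$ not contained in a single $V_i$. Then $H\subseteq H^\ast$, the chosen partition is still a proper colouring of $H^\ast$, and $|V(H^\ast)|=|V(H)|$, so $\chi(H^\ast)=k$ and $\chi_l(H)\le\chi_l(H^\ast)$. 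Hence it suffices to prove the conjecture for every such $H^\ast$, and (by shrinking lists) we may assume every list $L(v)$ has size exactly $k$.

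The observation that makes $H^\ast$ tractable is a clean reformulation of properness: a colouring $c$ of $H^\ast$ is proper if and only if every colour class of $c$ that meets two or more of the parts $V_i$ has size at most $r-1$. Indeed, a monochromatic edge is exactly an $r$-subset lying in one colour class and meeting at least two parts, and a colour class $C$ contains such an $r$-subset precisely when $|C|\ge r$ and $C$ is not contained in a single part. For $r=2$ this recovers the familiar statement that the colour classes of a complete multipartite graph lie inside single parts, so the conjecture genuinely extends Ohba's theorem, with the parameter $r$ now playing the role of the threshold $2$. The task thus becomes: given the $k$-list assignment $L$, choose $c(v)\in L(v)$ for every $v$ so that each colour used on vertices of two or more distinct parts is used at most $r-1$ times in total.

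To prove $k$-choosability of $H^\ast$ I would argue by induction, following the blueprint of Noel, Reed and Wu for $r=2$ but recalibrated to the threshold $r$. The easy engine of the induction is: if some colour $\alpha$ lies in $L(v)$ for every $v$ in a part $V_i$ with $|V_i|\ge r$, then colour all of $V_i$ by $\alpha$, delete these vertices and remove $\alpha$ from the remaining lists; the residual hypergraph is complete $(k-1)$-partite with lists of size $\ge k-1$, and the vertex count drops by $|V_i|\ge r$, which is exactly enough to preserve the inequality $|V|\le r\chi+r-1$ as $k\mapsto k-1$. Symmetrically, a colour class confined to one part is admissible regardless of its size, so parts of size at most $r-1$ impose no within-part constraint and are the locus of flexibility. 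The heart of the matter is therefore a minimal counterexample in which no colour saturates a large part and the small parts cannot be cleared away; for such a configuration one studies the bipartite incidence between colours and parts and seeks, via a Hall-type defect argument combined with a count that plays the $k$ colours available at each vertex off against the global budget of $rk+r-1$ vertices, a reducible sub-configuration whose existence contradicts minimality.

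I expect the decisive obstacle to be precisely this last step at the \emph{tight} bound. A straightforward greedy or averaging version of the recursion only guarantees that one can remove about $r-\tfrac12$ vertices per eliminated colour rather than the full $r$, which is exactly why the unconditional theorem of this paper stops at $|V(H)|\le (r-\tfrac12)\chi(H)+\tfrac r2-1$; closing the gap to $r\chi+r-1$ requires a delicate global analysis of how lists of size $k$ can be distributed among $k$ parts of total size at most $rk+r-1$ without admitting a proper $L$-colouring. Adapting the most intricate lemmas of Noel--Reed--Wu --- in particular their treatment of parts of sizes $1$ and $2$, which here becomes the far richer analysis of parts of every size up to $r-1$ --- is the crux, and is the reason the statement is posed as a conjecture rather than proved outright.
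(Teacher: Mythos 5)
Your proposal is not a proof, and it cannot be judged against the paper's proof because the paper has none: the statement is an open conjecture, which the paper supports only with partial results. What you do complete is sound and coincides with the paper's own reduction. Passing from $H$ to the complete multipartite hypergraph $H^\ast$ on the colour classes of an optimal colouring preserves the vertex count and the chromatic number while only increasing the list chromatic number; this is exactly how the paper restates Conjecture \ref{ourmain} as Conjecture \ref{regconj}, via Proposition \ref{regular}. Your reformulation of properness (a colour class meeting two or more parts must have size at most $r-1$) is also correct, and your ``engine'' --- clearing a part of size at least $r$ whose vertices share a common colour and recursing with $k-1$ colours --- is the same opening move used in the paper's proofs of Theorems \ref{cc2rm1} and \ref{ccrp1}. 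One technical caveat even there: for the recursion to apply the inductive hypothesis you must first normalize the parts so that at most one has size less than $r-1$ (the paper does this explicitly); otherwise deleting a part can make the chromatic number of the residual hypergraph drop below $k-1$, and the hypothesis $|V|\le r\chi+r-1$ need not survive.

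The genuine gap is everything after that, and you concede it yourself: the analysis of a minimal counterexample --- the ``Hall-type defect argument combined with a count'' that is supposed to produce ``a reducible sub-configuration whose existence contradicts minimality'' --- is never carried out, and that step is the entire content of the conjecture. Your diagnosis of why it is hard is accurate: a naive averaging recursion only removes roughly $r-\tfrac12$ vertices per eliminated colour, which is exactly why the unconditional result in the paper stops at $|V(H)|\le (r-\tfrac12)\chi(H)+\tfrac r2-1$ (Theorem \ref{weakerhyperohba}), and the Hall-type machinery you allude to is precisely what the paper formalizes in Lemmas \ref{ghall}, \ref{eitheror} and \ref{small} and then deploys, with considerable additional case analysis, only for the two special families $K^r_{2r-1,r*(k-1)}$ and $K^r_{(r+1)*(r-1),r*(k-r+1)}$ (Theorems \ref{cc2rm1} and \ref{ccrp1}). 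So the proposal is a reasonable research plan pointed in the same direction as the paper's partial results, but it does not prove Conjecture \ref{ourmain}, and no amount of local repair will fix it: the missing step is an open problem.
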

It turns out that Conjecture \ref{ourmain} implies the conjecture of Yan et al. Furthermore, we show that the condition of Conjecture \ref{ourmain} is sharp by giving tow classes of $r$-uniform hypergraphs $H$ with $|V(H)|= r\chi(H)+r$ which are not chromatic-choosable. Finally, to support our conjecture we give two classes of  $r$-uniform hypergraphs $H$  with $|V(H)|=r\chi(H)+r-1$ and show that they are  chromatic-choosable.

\section{Chromatic-choosability with high\\ chromatic number}

For a graph $G$ and a set $C$ of colors, a coloring $f\colon\,V(G)\rightarrow C$ is a $d$-\emph{improper coloring} if each color class induces a subgraph with maximum degree at most $d$. Let  $\chi^d(G)$ and $\chi_l^d(G)$ denote the $d$-improper chromatic number and $d$-improper list chromatic number of $G$, respectively. Yan et al. \cite{yan2015} proposed an Ohba-like conjecture for $d$-improper colorings.
\begin{conjecture}\label{imohba}\cite{yan2015}
For any graph $G$, if $$|V(G)|\le (d+2)\chi^d(G)+(d+1)$$ then $\chi_l^d(G)=\chi^d(G).$
\end{conjecture}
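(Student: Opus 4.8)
The plan is to establish the stated implication by a direct reduction that converts $d$-improper coloring of a graph into ordinary (proper) coloring of an associated uniform hypergraph. Given a graph $G$ and the integer $d$, I would construct a $(d+2)$-uniform hypergraph $H$ on the same vertex set $V(H)=V(G)$ whose edges are exactly the sets $\{v,u_1,\dots,u_{d+1}\}$ in which $u_1,\dots,u_{d+1}$ are $d+1$ distinct neighbors of $v$ in $G$. Each such edge has precisely $d+2$ vertices, so $H$ is genuinely $(d+2)$-uniform, and clearly $|V(H)|=|V(G)|$.

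The crux is the following coloring correspondence, which I would verify in both directions: a coloring $f$ of the common vertex set is a $d$-improper coloring of $G$ if and only if $f$ is a proper coloring of $H$. Indeed, $f$ fails to be $d$-improper exactly when some vertex $v$ has at least $d+1$ neighbors receiving the color $f(v)$; choosing $d+1$ of them together with $v$ produces a monochromatic edge of $H$, and conversely any monochromatic edge $\{v,u_1,\dots,u_{d+1}\}$ exhibits a vertex $v$ with $d+1$ like-colored neighbors. Because this equivalence fixes the color assigned to each individual vertex, it applies verbatim when colors are restricted to prescribed lists. Thus, for every list assignment $L$, the map $f$ is a $d$-improper $L$-coloring of $G$ if and only if it is an $L$-coloring of $H$.

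From this correspondence I would read off the two identities $\chi(H)=\chi^d(G)$ and $\chi_l(H)=\chi_l^d(G)$, the latter holding because $H$ is $k$-choosable exactly when $G$ is $d$-improper $k$-choosable. Now set $r=d+2$ and assume the hypothesis of Conjecture \ref{imohba}, namely $|V(G)|\le (d+2)\chi^d(G)+(d+1)$. Rewriting the right-hand side as $r\chi(H)+r-1$ and using $|V(H)|=|V(G)|$, this is precisely the hypothesis of Conjecture \ref{ourmain} for the $r$-uniform hypergraph $H$. Granting Conjecture \ref{ourmain} then gives $\chi_l(H)=\chi(H)$, which translates back to $\chi_l^d(G)=\chi^d(G)$, establishing Conjecture \ref{imohba}.

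Since the argument is a reduction rather than a fresh combinatorial estimate, there is no deep obstacle; the points that demand care are essentially bookkeeping. I would confirm that $H$ stays $(d+2)$-uniform even for low-degree vertices (a vertex of degree at most $d$ simply contributes no edge and can never violate the $d$-improper condition), check that edges arising from different ``centers'' coincide only as sets and therefore do not disturb the family of proper colorings, and verify the arithmetic $r\chi(H)+r-1=(d+2)\chi^d(G)+(d+1)$ so that the two thresholds match exactly.
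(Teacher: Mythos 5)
Your proposal is correct and follows essentially the same route as the paper: your hypergraph $H$ is exactly the paper's $G^{(r)}$ with $r=d+2$ (an $r$-set is an edge iff some vertex in it is adjacent to the other $r-1$, i.e.\ $\Delta(G[S])=r-1$), your coloring correspondence is Proposition \ref{corres}, and the concluding arithmetic is the paper's proof that Conjecture \ref{ourmain} implies Conjecture \ref{imohba}. Like the paper's argument, yours is necessarily conditional on Conjecture \ref{ourmain}, which is the most one can do since the statement being addressed is itself a conjecture.
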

For a graph $G$ and an integer $r\ge 2$, we construct an $r$-uniform hypergraph $G^{(r)}$ as follows:

\noindent 1). $V(G^{(r)})=V(G)$, and

\noindent 2). $E(G^{(r)})=\{S\subseteq V(G)\colon\, |S|=r~\text{and}~ \Delta(G[S])=r-1\}$, where $\Delta(G[S])$ is the maximum  degree of $G[S]$, i.e., the subgraph of $G$ induced by $S$.

\begin{prop}\label{corres}
For any graph $G$ and nonnegative integer $d$, we have $\chi(G^{(d+2)})=\chi^d(G)$ and $\chi_l(G^{(d+2)})=\chi^d_l(G)$.
\end{prop}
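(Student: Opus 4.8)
The plan is to reduce both equalities to a single combinatorial observation: on the common vertex set $V(G)=V(G^{(d+2)})$, a coloring is a $d$-improper coloring of $G$ if and only if it is a proper coloring of the hypergraph $G^{(d+2)}$. Once this equivalence of valid colorings is established, both $\chi(G^{(d+2)})=\chi^d(G)$ and $\chi_l(G^{(d+2)})=\chi^d_l(G)$ follow at once, since the relevant minima are then taken over identical families of colorings. Throughout I write $r=d+2$, so that $r-1=d+1$.

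First I would prove one direction by contraposition. Suppose a coloring $f$ is not proper for $G^{(r)}$; then some edge $S$ is monochromatic. By the definition of $E(G^{(r)})$ we have $|S|=r$ and $\Delta(G[S])=r-1$, so $S$ contains a vertex $v$ adjacent in $G$ to all of the other $r-1$ vertices of $S$. As $S$ is monochromatic, these $r-1=d+1$ neighbors of $v$ all share the color of $v$, whence the color class of $f(v)$ induces a subgraph of maximum degree at least $d+1$. Thus $f$ is not a $d$-improper coloring of $G$.

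Conversely, suppose $f$ is not a $d$-improper coloring of $G$. Then some color class $V_c$ induces a subgraph of maximum degree at least $d+1=r-1$, so there is a vertex $v\in V_c$ with at least $r-1$ neighbors lying in $V_c$. Taking $v$ together with any $r-1$ of these neighbors yields a set $S$ with $|S|=r$ in which $v$ is adjacent to every other vertex, so $\Delta(G[S])\ge r-1$; since no vertex of an $r$-set can have degree exceeding $r-1$, in fact $\Delta(G[S])=r-1$. Hence $S\in E(G^{(r)})$, and $S$ is monochromatic because $S\subseteq V_c$, so $f$ is not proper for $G^{(r)}$. This establishes the equivalence.

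Finally, the list versions require no new idea. Every list assignment $L$ is defined on the shared vertex set, and the equivalence above shows that $G$ admits a $d$-improper $L$-coloring precisely when $G^{(r)}$ admits a proper $L$-coloring; consequently $G$ is $d$-improperly $k$-choosable if and only if $G^{(r)}$ is $k$-choosable, and passing to the least such $k$ gives $\chi_l(G^{(d+2)})=\chi^d_l(G)$. I do not expect a real obstacle here: the only point that needs a moment's care is checking that the $r$-set $S$ built in the converse direction satisfies the defining condition with the equality $\Delta(G[S])=r-1$ rather than a mere inequality, which is immediate from the trivial degree bound in an $r$-vertex graph.
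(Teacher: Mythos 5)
Your proof is correct and takes essentially the same approach as the paper: the paper's entire proof is the one-line observation that a coloring of $G$ is $d$-improper if and only if it is proper as a coloring of $G^{(d+2)}$, which is exactly the equivalence you establish (with full details that the paper leaves as ``easy to see''). The list and non-list conclusions then follow identically in both arguments.
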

\begin{proof}
It is easy to see that a coloring $f$ of $G$ is $d$-improper if and only if $f$ is proper when regarded as a coloring of $G^{(d+2)}$. Thus, the assertion holds. \end{proof}

\begin{thm}
Conjecture \ref{ourmain} implies Conjecture \ref{imohba}.
\end{thm}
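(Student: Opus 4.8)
The plan is to use the correspondence $G\mapsto G^{(d+2)}$ together with Proposition \ref{corres} to translate the hypothesis and conclusion of Conjecture \ref{imohba} directly into an instance of Conjecture \ref{ourmain}. Given an arbitrary graph $G$ and a nonnegative integer $d$ satisfying the hypothesis of Conjecture \ref{imohba}, I would set $r=d+2$ and consider the $r$-uniform hypergraph $H=G^{(r)}$. Since $d\ge 0$ we have $r=d+2\ge 2$, so $H$ is an admissible object for Conjecture \ref{ourmain}, and because $V(G^{(r)})=V(G)$ by construction we have $|V(H)|=|V(G)|$.

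The first step is to rewrite the hypothesis. Using $r=d+2$ (so that $d+1=r-1$) and the identity $\chi^d(G)=\chi(G^{(r)})$ from Proposition \ref{corres}, the assumed inequality $|V(G)|\le (d+2)\chi^d(G)+(d+1)$ becomes exactly $|V(H)|\le r\chi(H)+(r-1)$, which is precisely the hypothesis of Conjecture \ref{ourmain} applied to $H$. Assuming Conjecture \ref{ourmain}, I then conclude $\chi_l(H)=\chi(H)$. Applying Proposition \ref{corres} once more, namely $\chi_l(G^{(r)})=\chi_l^d(G)$ and $\chi(G^{(r)})=\chi^d(G)$, this equality transfers back to $\chi_l^d(G)=\chi^d(G)$, which is the conclusion of Conjecture \ref{imohba}.

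Since every step is a direct substitution justified by Proposition \ref{corres}, I do not expect a genuine obstacle here; the argument is essentially a bookkeeping translation. The only points requiring care are the arithmetic matching of the pair $(d+2,\,d+1)$ with $(r,\,r-1)$ and the verification that $r\ge 2$ so that $H=G^{(r)}$ is a valid instance of Conjecture \ref{ourmain}, both of which are immediate from $r=d+2$ and $d\ge 0$.
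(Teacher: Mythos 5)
Your proof is correct and follows essentially the same route as the paper's: set $r=d+2$, $H=G^{(r)}$, translate the hypothesis via Proposition \ref{corres}, apply Conjecture \ref{ourmain}, and translate back. The only addition is your explicit check that $r\ge 2$, which the paper leaves implicit.
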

\begin{proof}
 Let $G$ be a graph with at most $(d+2)\chi^d(G)+(d+1)$ vertices. Let $r=d+2$ and $H=G^{(r)}$. By Proposition \ref{corres}, $\chi(H)=\chi^d(G)$. Note that $H$ and $G$ have the same vertex set. Thus, $|V(H)|\le (d+2)\chi^d(G)+(d+1)=r\chi(H)+r-1$. If Conjecture \ref{ourmain} is true, then $\chi_l(H)=\chi(H)$. This implies $\chi_l^d(G)=\chi^d(G)$ by Proposition \ref{corres}. The proof is completed.
\end{proof}

To support Conjecture \ref{imohba}, Yan et al. \cite{yan2015} also proved the following result.
\begin{thm} \label{impohbaweak}(Theorem 1, \cite{yan2015})
For any graph $G$ and integer $d\ge 0$, if
$$|V(G)|\le (d+1)\chi^d(G)+\sqrt{(d+1)\chi^d(G)}-d$$
then $\chi_l^d(G)=\chi^d(G).$
\end{thm}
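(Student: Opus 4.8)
My plan is to generalise Ohba's original $\sqrt{2\chi}$ argument to the $d$-improper setting and to induct on $k:=\chi^d(G)$. Fix a $k$-list assignment $L$; we must produce a $d$-improper $L$-colouring, and by Proposition~\ref{corres} the admissible colour classes are exactly the vertex sets $S$ with $\Delta(G[S])\le d$. It is worth noting at the outset that for $d=0$ the claimed bound $|V(G)|\le k+\sqrt{k}$ is \emph{weaker} than Ohba's $k+\sqrt{2k}$, so the slack we must exploit is smaller and the counting is correspondingly more forgiving than in the tight proper case.

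First I would reduce to a canonical graph. Take an optimal $d$-improper colouring $V_1,\dots,V_k$ of $G$ and add every edge between distinct parts; the resulting graph $G^{+}\supseteq G$ still has $V_1,\dots,V_k$ as a valid $d$-improper colouring, so $\chi^d(G^{+})=k$, while adding edges only adds constraints and hence $\chi_l^d(G^{+})\ge\chi_l^d(G)$. Since $G^{+}$ has the same vertex count and the same $\chi^d$, it satisfies the same hypothesis, so it suffices to treat $G^{+}$. Thus I may assume $G$ is complete between the parts $V_1,\dots,V_k$ with each $\Delta(G[V_i])\le d$; replacing each $G[V_i]$ by a disjoint union of cliques of order $\le d+1$ keeps $\chi^d=k$ and normalises the within-part structure. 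In this form a short degree count shows that an admissible class meeting two or more parts has bounded size, whereas a class inside a single part may be as large as the part; this asymmetry is precisely the extra freedom the improper version must handle and has no analogue in the proper case.

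The engine of the induction is a peeling step. The hard instances have $|V(G)|$ close to $(d+1)k$, so that $G$ is near the complete graph $K_{(d+1)k}$ (for which $\chi^d=k$), and the slack $|V(G)|-(d+1)k\le\sqrt{(d+1)k}-d$ is small. If some part $V_i$ has order at least $d+2$ and there is a colour $c\in\bigcap_{v\in V_i}L(v)$, I colour all of $V_i$ with $c$ (legal, as $\Delta(G[V_i])\le d$) and delete $c$ from the remaining lists; then $\chi^d(G-V_i)\le k-1$, the lists have size $\ge k-1$, and the inequality $d+2\ge (d+1)+\bigl(\sqrt{(d+1)k}-\sqrt{(d+1)(k-1)}\bigr)$ (valid in the main range of parameters, with the boundary cases treated directly) shows $|V(G-V_i)|$ still meets the hypothesis at level $k-1$, so induction finishes. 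When no such part--colour pair exists, and in particular when every part has order at most $d+1$ so that $|V(G)|\le(d+1)k$, I would instead complete the colouring by a Hall / system-of-distinct-representatives argument, assigning the colours among the parts so that each part receives enough of its listed colours to cover its $\le d+1$ vertices.

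I expect the genuine difficulty to be this last, Hall-type step together with the interface between the two regimes. Producing a large part that \emph{also} shares a common colour across all of its lists is not automatic, so the peeling and the global Hall argument must be interleaved, and the Hall verification must absorb the improper flexibility that one colour may legally occupy up to $d+1$ vertices of a part or straddle several small parts. Balancing these contributions is exactly what yields a square-root slack rather than a linear one, and calibrating the constant to the stated $\sqrt{(d+1)k}-d$ --- so that it degenerates to the weak form of Ohba's theorem at $d=0$ --- is the delicate heart of the proof.
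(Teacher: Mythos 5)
Note first that the paper itself does not prove this statement: Theorem~\ref{impohbaweak} is quoted verbatim from Yan, Wang and Xue \cite{yan2015} (their Theorem~1) and is used here only as background, so there is no in-paper proof to compare against; your proposal must therefore stand on its own. It does not, because it is a plan rather than a proof, and the plan's decisive step is missing. The case in which no part of size at least $d+2$ has a colour common to all of its lists is exactly where the $\sqrt{(d+1)\chi^d(G)}$ slack must be earned; in Ohba-type arguments this is a genuine counting argument (if the lists on a big part have empty intersection, then the colours appearing on that part are spread out, which forces $|L(X)|$ to be large for the relevant sets $X$, and only then can a Hall/SDR condition be verified). You explicitly defer this step as ``the delicate heart of the proof'' instead of carrying it out. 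Moreover, your case split conflates ``no part--colour pair exists'' with ``every part has order at most $d+1$'': the genuinely hard configuration is a part of size at least $d+2$ whose lists have empty intersection, and your Hall argument is only even sketched for the regime where all parts are small. With the central counting absent, the theorem is not proved.

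Two of the steps you do spell out are also wrong as stated. (i) The peeling inequality $d+2\ge (d+1)+\sqrt{(d+1)k}-\sqrt{(d+1)(k-1)}$ is equivalent to $\sqrt{d+1}\le\sqrt{k}+\sqrt{k-1}$, i.e.\ roughly $d+1\le 4k$. For every fixed $k$ this fails for all sufficiently large $d$ (e.g.\ $k=2$, $d\ge 5$), so these are not ``boundary cases'': an infinite family of parameter pairs falls outside your induction step, and in that regime deleting a part of size $d+2$ loses more slack than the bound regains. (ii) The normalisation replacing each $G[V_i]$ by a disjoint union of cliques of order at most $d+1$ is not legitimate: a graph of maximum degree at most $d$ need not be a subgraph of such a union (take $C_5$ inside a part with $d=2$), so the modified graph is not a supergraph of $G$ and its $d$-improper list colourings do not transfer back to $G$. (The normalisation is also unnecessary, since every subset of a part with $\Delta(G[V_i])\le d$ is an admissible class no matter what the internal structure is.) Finally, a standard but real technicality: deleting $V_i$ may drop $\chi^d$ strictly below $k-1$, in which case the theorem's hypothesis need not hold for $G-V_i$; the induction must be run on the stronger statement that $k\ge\chi^d(G)$ and $|V(G)|\le (d+1)k+\sqrt{(d+1)k}-d$ imply $d$-improper $k$-choosability, and your sketch never sets this up.
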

For two vertex disjoint graphs $G_1$ and $G_2$, the \emph{join} of $G_1$ and $G_2$, denoted by $G_1 + G_2$, is obtained from their union by adding edges joining every vertex of $G_1$ to every vertex of $G_2$. Let $K_n$ be the complete graph with $n$ vertices.
\begin{cor}\label{impohbajoin}(Corollary 1, \cite{yan2015})
For any graph $G$ and integer $d\ge 0$, if $n\ge (|V(G)|+d)^2$ then $\chi_l^d(G+K_n)=\chi^d(G+K_n)$.
\end{cor}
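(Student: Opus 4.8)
The plan is to apply Theorem~\ref{impohbaweak} directly to the graph $H=G+K_n$. Write $m=|V(G)|$, so that $|V(H)|=m+n$, and abbreviate $\chi=\chi^d(H)$. The entire argument reduces to checking that $H$ satisfies the hypothesis of Theorem~\ref{impohbaweak}, namely $|V(H)|\le(d+1)\chi+\sqrt{(d+1)\chi}-d$; once this is verified, Theorem~\ref{impohbaweak} yields $\chi_l^d(H)=\chi^d(H)$, which is exactly the assertion $\chi_l^d(G+K_n)=\chi^d(G+K_n)$.

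The key step is a lower bound on $\chi^d(G+K_n)$ coming from the clique. In any $d$-improper coloring of $H$, consider a single color class $C$. The vertices of $C$ lying in $K_n$ are pairwise adjacent, so if there are $s$ of them then $C$ induces a subgraph containing $K_s$, whose maximum degree is $s-1$; the $d$-improper condition forces $s-1\le d$, i.e.\ $s\le d+1$. The additional join-edges from $V(K_n)$ to the $G$-vertices of $C$ can only raise these degrees, so they never relax this bound. Hence each color class meets $V(K_n)$ in at most $d+1$ vertices, and covering all $n$ vertices of $K_n$ requires at least $\lceil n/(d+1)\rceil$ classes. Therefore $\chi\ge n/(d+1)$, equivalently $(d+1)\chi\ge n$.

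It remains to feed this into the inequality. The function $x\mapsto(d+1)x+\sqrt{(d+1)x}-d$ is increasing, so from $(d+1)\chi\ge n$ together with the hypothesis $n\ge(m+d)^2$ (which gives $\sqrt{n}\ge m+d$) I would estimate
\[
(d+1)\chi+\sqrt{(d+1)\chi}-d\ \ge\ n+\sqrt{n}-d\ \ge\ n+(m+d)-d\ =\ m+n\ =\ |V(H)|.
\]
This is precisely the hypothesis of Theorem~\ref{impohbaweak} for $H$, completing the reduction.

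I do not expect a serious obstacle: the corollary is essentially a monotonicity-and-substitution argument built on Theorem~\ref{impohbaweak}. The only point needing care is the lower bound $\chi^d(G+K_n)\ge\lceil n/(d+1)\rceil$, where one must justify both that each color class uses at most $d+1$ of the clique vertices and that the extra edges to $V(G)$ cannot weaken this. After that, the threshold $(m+d)^2$ is exactly what is required to absorb the $\sqrt{\cdot}$ term against the additive $-d$, so no further slack estimate is needed.
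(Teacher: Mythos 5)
Your proposal is correct, and it is the intended derivation: the paper itself states this corollary without proof (quoting Corollary 1 of \cite{yan2015}), where it is obtained from Theorem \ref{impohbaweak} in precisely this way. Your two key steps---the clique forcing $(d+1)\chi^d(G+K_n)\ge n$ because a color class can contain at most $d+1$ vertices of $K_n$, and then the monotonicity computation $n+\sqrt{n}-d\ge n+|V(G)|$ under the hypothesis $n\ge(|V(G)|+d)^2$---are exactly what is needed to verify the hypothesis of Theorem \ref{impohbaweak} for $G+K_n$.
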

Using Corollary \ref{impohbajoin}, Wang et al.  further improved Theorem \ref{impohbaweak} as follows.
\begin{thm}\label{weakerimproohba} (Theorem 2, \cite{wang2017})
For any graph $G$,  if $$|V(G)|\le (d+\frac{3}{2})\chi^d(G)+\frac{d}{2}$$ then $\chi_l^d(G)=\chi^d(G).$
\end{thm}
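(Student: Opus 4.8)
The plan is to reduce the statement to a canonical extremal instance and then to manufacture, inside that instance, a large clique that lets us invoke Corollary~\ref{impohbajoin}. Write $k=\chi^d(G)$ and fix a $d$-improper $k$-colouring with classes $V_1,\dots,V_k$, each inducing a subgraph of maximum degree at most $d$. Adding edges cannot decrease $\chi_l^d$, and adding edges either between distinct classes or within a class (up to saturating it into a disjoint union of cliques, each of order at most $d+1$) keeps the partition a valid $d$-improper $k$-colouring. Hence it suffices to treat the extremal graph $G^{*}$ in which every class is a disjoint union of cliques of order at most $d+1$ and any two distinct classes are completely joined; here $G\subseteq G^{*}$, so $\chi^d(G^{*})=k$ and $\chi_l^d(G)\le\chi_l^d(G^{*})$, and I may replace $G$ by $G^{*}$.

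The role of the coefficient $d+\tfrac32$ is to force many classes to be as small as possible. Call a class \emph{minimal} if $|V_i|\le d+1$; in $G^{*}$ each minimal class is a single clique. If $s$ classes are minimal and the remaining $k-s$ have order at least $d+2$, then $|V(G^{*})|\ge s+(d+2)(k-s)$, and combining this with $|V(G^{*})|\le (d+\tfrac32)k+\tfrac{d}{2}$ gives a linear-in-$k$ lower bound on $s$ (for $d=0$ this reads $s\ge k/2$). Since distinct classes are completely joined and each minimal class is itself a clique, the union of the minimal classes induces a complete graph $K_m$ with $m\ge s$, again linear in $k$, while the other classes form a core $G'$ with $\chi^d(G')\le k-s$. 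Thus $G^{*}=G'+K_m$ is a join with a clique whose order is linear in $k$, and one application of Corollary~\ref{impohbajoin} would give $\chi_l^d(G^{*})=\chi^d(G^{*})=k$ the moment $m\ge(|V(G')|+d)^2$.

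The main obstacle is precisely this threshold: Corollary~\ref{impohbajoin} demands a clique whose order is quadratic in the order of the core, whereas the vertex budget only guarantees $m$ linear in $k$, and the core itself may be of linear order. I would resolve this by an induction on $k$ interleaved with the reduction above. When the core is small enough that $m\ge(|V(G')|+d)^2$ holds, the corollary closes the case at once (and Theorem~\ref{impohbaweak} can absorb the residual bounded-core situations). When the core is large there are many classes of order at least $d+2$, and I would peel one of them off as a single colour, passing from $k$ to $k-1$ and deleting at least $d+2$ vertices (and $d+2\ge d+\tfrac32$), which preserves the inductive hypothesis $|V|\le (d+\tfrac32)k+\tfrac{d}{2}$. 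The genuinely hard point inside this peeling step is the usual list-colouring difficulty: one must show, through a Hall-type or kernel argument on the incidence between classes and available colours, that some removable class of order at least $d+2$ admits a colour common to all of its lists, so that deleting it leaves a smaller instance that is still list-colourable. Balancing this peeling against the linear-versus-quadratic gap, so that the induction always terminates in the regime where Corollary~\ref{impohbajoin} applies, is the crux of the argument.
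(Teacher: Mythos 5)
First, a point of reference: this paper does not prove the statement at all --- it is quoted verbatim as Theorem 2 of \cite{wang2017}, with only the remark that the proof there uses Corollary \ref{impohbajoin}. So your proposal must stand on its own, and it does not: it is an outline whose crux is, by your own admission, left open. Your reduction to the join structure and the count of small classes are sound, modulo one slip: a class of size at least $d+2$ inducing maximum degree at most $d$ cannot in general be saturated into a disjoint union of cliques of order at most $d+1$ (for $d=2$ the $4$-cycle is edge-maximal with maximum degree $2$ and is not a union of triangles); this is harmless, since only the small classes need to become cliques to form $K_m$.

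The genuine gap is the peeling step, and it is not merely unfinished --- it is formulated as a claim that is false. You cannot show that ``some removable class of order at least $d+2$ admits a colour common to all of its lists'': nothing forces this, since two vertices of the same class may simply be given disjoint $k$-lists, and no Hall-type or kernel argument can produce a common colour that does not exist. In every proof of this genre --- Ohba's, Reed--Sudakov's \cite{Reed}, and the hypergraph arguments run in this very paper (Claim 1 and Claims 2--5 in the proof of Theorem \ref{cc2rm1}, and the two branches in Proposition \ref{start}) --- the dichotomy is the opposite of what you wrote: \emph{either} some class has a common colour, in which case one peels it and inducts, \emph{or} no class has a common colour, in which case one proves directly that the graph is $L$-colourable by a counting argument (every colour appears on few vertices of each class, so $|L(X)|$ is large for every $X$, and a Hall/system-of-distinct-representatives colouring as in Lemmas \ref{ghall}--\ref{small} finishes). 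Your plan has no branch for the no-common-colour case, so the induction cannot even begin on such list assignments. Separately, the linear-versus-quadratic mismatch in applying Corollary \ref{impohbajoin} is acknowledged but never resolved: the assertion that ``Theorem \ref{impohbaweak} can absorb the residual bounded-core situations'' is not supported by any estimate tying the size of the core to the $\sqrt{(d+1)\chi^d}$ slack in that theorem. Until both of these are supplied, the proposal is a plausible skeleton, not a proof.
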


We remark that, all of Theorem \ref{impohbaweak},  Corollary \ref{impohbajoin} and Theorem \ref{weakerimproohba} have analogous forms for $r$-uniform hypergraphs by  properly extending the relevant concepts. For two vertex disjoint $r$-uniform hypergraphs $H_1$ and $H_2$, the \emph{join} of $H_1$ and $H_2$, denoted by $H_1+H_2$, is an $r$-uniform hypergraph with vertex set $V(H_1)\cup V(H_2)$ and edge set $E(H_1)\cup E(H_2)\cup \{S\subseteq V(H_1)\cup V(H_2)\colon\,|S|=r,S\not\subseteq V(H_1) \text{~and~} S\not\subseteq V(H_2)\}$. The \emph{complete $r$-uniform hypergraph} on $n$ vertices, denote by $K_n^{(r)}$,  has all $k$-subsets of its vertex set as edges.

Now we give  the following analogous form of Theorem  \ref{weakerimproohba} for $r$-uniform hypergraphs. The original proof for Theorem  \ref{weakerimproohba} is also valid for $r$-uniform hypergraphs by setting $r=d+2$ and replacing `$d$-improperly $L$-colorable', `graph join'  and `complete graph $K_n$' by `$L$-colorable', `$r$-uniform hypergraph join' and  `complete $r$-uniform graph $K_n^{(r)}$', respectively.
\begin{thm}\label{weakerhyperohba}
For any $r$-uniform hypergraph $H$, if
$$|V(H)|\le (r-\frac{1}{2})\chi(H)+\frac{r}{2}-1$$
 then $\chi_l(H)=\chi(H).$
\end{thm}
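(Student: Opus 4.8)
The plan is to transport the proof of Theorem~\ref{weakerimproohba} to the hypergraph setting under the dictionary announced above: set $r=d+2$ and read ``$d$-improper ($L$-)coloring of $G$'' as ``proper ($L$-)coloring of $H$'', ``graph join'' as ``$r$-uniform hypergraph join'', and ``$K_n$'' as ``$K_n^{(r)}$''. The correspondence underlying this translation is exactly Proposition~\ref{corres}, which identifies $d$-improper colorings of $G$ with proper colorings of the associated $(d+2)$-uniform hypergraph; since every notion used in the graph argument (independent set, color class, join, list assignment) has a literal hypergraph counterpart, the task reduces to checking that each step survives the substitution. Two inputs drive the argument and must first be recast: the hypergraph analogue of Theorem~\ref{impohbaweak}, namely that $|V(H)|\le (r-1)\chi(H)+\sqrt{(r-1)\chi(H)}-(r-2)$ forces $\chi_l(H)=\chi(H)$, and the hypergraph analogue of Corollary~\ref{impohbajoin}, namely that $\chi_l(H+K_n^{(r)})=\chi(H+K_n^{(r)})$ whenever $n\ge (|V(H)|+r-2)^2$.

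First I would reduce to complete $r$-uniform multipartite hypergraphs. Fix an optimal proper coloring of $H$ into independent parts $V_1,\dots,V_k$ with $k=\chi(H)$. Every edge of $H$ meets at least two parts, so $H$ is a subhypergraph of the join $H^\ast$ of the edge-free hypergraphs on $V_1,\dots,V_k$; this $H^\ast$ is the complete $r$-uniform $k$-partite hypergraph on parts of sizes $|V_1|,\dots,|V_k|$, and $\chi(H^\ast)=k$. Since $H\subseteq H^\ast$, any $L$-coloring of $H^\ast$ restricts to one of $H$, so $\chi_l(H)\le\chi_l(H^\ast)$ and it suffices to prove the bound for $H^\ast$. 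Here I would record the basic fact $\chi(K_n^{(r)})=\lceil n/(r-1)\rceil$, since a color class of a complete $r$-uniform hypergraph contains no edge and hence at most $r-1$ vertices; consequently $\chi(H^\ast+K_n^{(r)})=k+\lceil n/(r-1)\rceil$.

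Next I would split the parts of $H^\ast$ according to whether $|V_i|\le r-1$ or $|V_i|\ge r$, and use the vertex bound $|V(H)|\le (r-\tfrac12)k+\tfrac r2-1$ to control their distribution. The large parts assemble, after combining and adjoining a complete-hypergraph factor, into a join of the form $H'+K_n^{(r)}$ with $n$ quadratically large, to which the analogue of Corollary~\ref{impohbajoin} applies, while the residual hypergraph carried by the small parts obeys the weaker ratio of the analogue of Theorem~\ref{impohbaweak}; reconciling the two colorings on the shared lists yields a proper $L$-coloring with $k$ colors. The gain from the coefficient $r-1$ in the $\sqrt{\,}$-bound to the coefficient $r-\tfrac12$ in the target is precisely what the join ingredient buys.

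The main obstacle is the bookkeeping in this last step: one must arrange the split so that the part handed to the join ingredient genuinely carries a $K_n^{(r)}$ factor with $n\ge(|V(H')|+r-2)^2$ — a quadratic demand that the small number of large parts must be shown to meet — while the complementary part meets the sub-linear slack of the $\sqrt{\,}$-bound, and then to merge the two list-colorings without color conflicts. A secondary, more mechanical obstacle is verifying that the substitution $r=d+2$ is faithful at every stage: that the hypergraph join behaves on chromatic and choice numbers exactly as the graph join does in \cite{wang2017}, that independent sets and color classes correspond under Proposition~\ref{corres}, and that the closing inequality reproduces $(r-\tfrac12)\chi(H)+\tfrac r2-1$ from $(d+\tfrac32)\chi^d(G)+\tfrac d2$. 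Once these checks are in place, the argument of Theorem~\ref{weakerimproohba} goes through unchanged.
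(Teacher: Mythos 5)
Your proposal takes exactly the route the paper does: the paper's entire ``proof'' of Theorem~\ref{weakerhyperohba} is the remark preceding it, namely that the original proof of Theorem~\ref{weakerimproohba} in \cite{wang2017} remains valid for $r$-uniform hypergraphs after setting $r=d+2$ and replacing ``$d$-improperly $L$-colorable'', ``graph join'' and ``$K_n$'' by ``$L$-colorable'', ``hypergraph join'' and ``$K_n^{(r)}$'', with the hypergraph analogues of Theorem~\ref{impohbaweak} and Corollary~\ref{impohbajoin} as the two ingredients. Your dictionary and the translated bounds (the $\sqrt{\,}$-hypothesis $(r-1)\chi(H)+\sqrt{(r-1)\chi(H)}-(r-2)$, the join threshold $n\ge(|V(H)|+r-2)^2$, and the target $(r-\tfrac12)\chi(H)+\tfrac r2-1$) are all correct, so this is the same argument, spelled out in somewhat more detail than the paper itself provides.
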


For $k$ positive integers $p_1,p_2,\ldots,p_k$, let $V_1,V_2,\ldots,V_k$ be $k$ disjoint sets of size $p_1$,$p_2$,$\ldots$,$p_k$, respectively. Following \cite{Emtander}, we define the $r$-complete $k$-partite hypergraph $K_{p_1,p_2,\ldots,p_k}^r$ with partite sets $V_1$,$V_2$,$\ldots$,$V_k$  as follows:

\noindent1). $V(K_{p_1,p_2,\ldots,p_k}^r)=V_1\cup V_2\cup\cdots\cup V_k$, and\\
2). $E(K_{p_1,p_2,\ldots,p_k}^r)=\{S\subseteq \bigcup_{i=1}^k V_i\colon\, |S|=r, S\not\subseteq V_i\text{~for\ any\ }i\in\{1,2,\ldots,k\}\}$.

We note that  the notion of $k$-partite hypergraph here means that each edge may contain two or more vertices from a partite set, which is different from others that used in some literatures. Nevertheless, when $r=2$, $K_{p_1,p_2,\ldots,p_k}^r$ agrees with  the usual complete $k$-partite graph $K_{p_1,p_2,\ldots,p_k}$. Further, if there are two $p_i$'s, say $p_1$ and $p_2$, which are less than $r-1$, then $K_{p_1,p_2,\ldots,p_k}^r$ is isomorphic to  $K_{p_1+1,p_2-1,\ldots,p_k}^r$ (or $K^r_{p_1+1,p_3,\ldots,p_k}$ if $p_2=1$). Therefore, in the following we always assume that $p_i\ge r-1$ for all $i\in \{1,2,\ldots,k\}$ with at most one exception. For simplicity, if $p_1=\cdots=p_s=p$ for some $s$ with $1\leq s\leq k$, we write $K_{p_1,p_2,\ldots,p_k}^r$ as $K_{p*s,p_{s+1},\ldots,p_k}^r$. Under this notation, $K_n^r$  contains no edges, which is different from the complete $r$-uniform hypergraph $K_n^{(r)}$ we defined ealier.
\begin{prop}\label{regular}
If $p_i\ge r-1$ for all $i\in \{1,2,\ldots,k\}$ with at most one exception, then $\chi(K_{p_1,p_2,\ldots,p_k}^r)=k$.
\end{prop}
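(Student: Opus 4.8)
The plan is to establish the two inequalities $\chi(K_{p_1,p_2,\ldots,p_k}^r)\le k$ and $\chi(K_{p_1,p_2,\ldots,p_k}^r)\ge k$ separately. For the upper bound I would exhibit the canonical coloring that assigns color $i$ to every vertex of $V_i$, using $k$ colors in total. By definition every edge $S$ satisfies $S\not\subseteq V_i$ for each $i$, so $S$ meets at least two partite sets and therefore receives at least two colors; hence no edge is monochromatic and the coloring is proper, giving $\chi\le k$.

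The substance of the argument is the lower bound, and the first key step is to characterize the color classes that induce no monochromatic edge. I claim that a set $C\subseteq V$ contains an edge unless either $|C|\le r-1$ or $C\subseteq V_i$ for a single $i$. Indeed, if $|C|\ge r$ and $C$ meets two distinct partite sets $V_a$ and $V_b$, then choosing $u\in C\cap V_a$, $v\in C\cap V_b$, together with any $r-2$ further vertices of $C$, produces an $r$-subset of $C$ that is not contained in any single $V_i$, i.e.\ an edge lying inside $C$; conversely each of the two stated conditions plainly forbids any edge. Consequently, in any proper coloring every color class is of one of two kinds: type (b), meaning it has at most $r-1$ vertices, or type (a), meaning it is contained in a single partite set.

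The second step is a counting argument. Suppose a proper coloring uses $t$ colors. For each $i$ let $a_i$ denote the number of type-(a) classes contained in $V_i$, and let $b$ denote the number of type-(b) classes, so that $t=b+\sum_i a_i$. Put $Z=\{i:a_i=0\}$. Since a vertex of $V_i$ belonging to a type-(a) class forces that class to be contained in $V_i$, for every $i\in Z$ all $p_i$ vertices of $V_i$ must lie in type-(b) classes. Each type-(b) class contributes at most $r-1$ vertices to $\bigcup_{i\in Z}V_i$, so $b\ge\lceil(\sum_{i\in Z}p_i)/(r-1)\rceil$. Using the hypothesis that $p_i\ge r-1$ for all $i$ with at most one exception (and $p_e\ge 1$ for the possible exceptional index $e$), one checks that in every case $\sum_{i\in Z}p_i>(|Z|-1)(r-1)$, whence $b\ge |Z|$ by integrality of $b$. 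Finally, since $a_i\ge 1$ for each $i\notin Z$, we obtain $t=b+\sum_{i\notin Z}a_i\ge |Z|+(k-|Z|)=k$, establishing $\chi\ge k$ and hence $\chi=k$.

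I expect the main obstacle to be the exceptional partite set in the counting step. If the unique index $e$ with $p_e\le r-2$ happens to lie in $Z$, the naive estimate $\sum_{i\in Z}p_i\ge |Z|(r-1)$ fails; instead I must argue via $\sum_{i\in Z}p_i\ge(|Z|-1)(r-1)+1$ and exploit the integrality of $b$ to still recover $b\ge |Z|$. Verifying that the strict inequality $\sum_{i\in Z}p_i>(|Z|-1)(r-1)$ holds uniformly (both with and without the exception) is the delicate point that makes the final bound $t\ge k$ go through.
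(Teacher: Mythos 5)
Your proof is correct, but it takes a genuinely different route from the paper's. The paper obtains the lower bound by monotonicity: since $p_i\ge r-1$ for all $i$ with at most one exception, $K_{(r-1)*(k-1),1}^r$ is a subhypergraph of $K_{p_1,p_2,\ldots,p_k}^r$, and in that subhypergraph \emph{every} $r$-subset of its $(r-1)(k-1)+1$ vertices is an edge, so each color class has at most $r-1$ vertices and $\chi\ge\lceil((r-1)(k-1)+1)/(r-1)\rceil=k$. You instead analyze an arbitrary proper coloring of the full hypergraph: you first characterize its independent sets (either of size at most $r-1$ or contained in a single part), then run a counting argument over the set $Z$ of parts owning no dedicated color class, using integrality of $b$ to absorb the one possibly-small part. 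The paper's route is shorter---the hypothesis is spent in one line exhibiting the subhypergraph, after which the pigeonhole is immediate---while yours is self-contained (no appeal to the fact that $\chi$ is monotone under subhypergraphs), yields the full description of independent sets of $K_{p_1,p_2,\ldots,p_k}^r$ as a by-product, and confronts the exceptional small part directly rather than shrinking all parts down to the extremal configuration. One presentational point: for the identity $t=b+\sum_i a_i$ your type-(a)/type-(b) classification must be made disjoint, since a class of size at most $r-1$ lying inside one part qualifies as both; fixing any tie-breaking convention does this, and your counting goes through under either choice, so this is a wording issue rather than a gap.
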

\begin{proof}
Since $\chi(K_{p_1,p_2,\ldots,p_k}^r)\le k$ always holds, it suffices to show the reversed inequality. By the assumption of the proposition, $K_{(r-1)*(k-1),1}^r$ is a subgraph of $K_{p_1,p_2,\ldots,p_k}^r$ and therefore,
$\chi(K_{p_1,p_2,\ldots,p_k}^r)\ge \chi(K_{(r-1)*(k-1),1}^r)$. Further, notice that each $r$-subset of $V(K_{(r-1)*(k-1),1}^r)$ is an edge. We have
$$\chi(K_{(r-1)*(k-1),1}^r)\ge \left\lceil\frac{(r-1)(k-1)+1}{r-1}\right\rceil=k.$$
Therefore, $\chi(K_{p_1,p_2,\ldots,p_k}^r)\ge k$ and the proposition follows.
\end{proof}
It is easy to see that Conjecture \ref{ourmain} is true  if and only if it is true for all $r$-complete multipartite hypergraphs. Thus, in view of Proposition \ref{regular} we can restate  Conjecture \ref{ourmain} as follows.
\begin{conjecture}\label{regconj}
Let $r\ge 2$ and let $p_1,p_2,\ldots,p_k$ be $k$ positive integers such that $p_i\ge r-1$ for all $i\in \{1,2,\ldots,k\}$ with at most one exception. If $\sum\limits_{i=1}^{k} p_i\le rk+r-1$, then $\chi_l(K_{p_1,p_2,\ldots,p_k}^r)=k$.
\end{conjecture}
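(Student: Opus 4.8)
It suffices to show that $K:=K^r_{p_1,p_2,\ldots,p_k}$ is $k$-choosable: by Proposition \ref{regular} we have $\chi(K)=k$, and since $\chi_l\ge\chi$ always holds, only $\chi_l(K)\le k$ remains. So I fix an arbitrary $k$-list assignment $L$ and try to produce an $L$-coloring. The first step is to pin down which colorings are admissible. An edge of $K$ is any $r$-set meeting at least two parts, so a coloring is proper precisely when no such $r$-set is monochromatic; equivalently, a color class $C$ is dangerous if and only if $|C|\ge r$ and $C$ is not contained in a single part. This yields the governing \emph{Observation}: a coloring of $K$ is proper if and only if every color class either lies inside a single part $V_i$ or has at most $r-1$ vertices. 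This is exactly the extra freedom that the hypergraph bound $\sum_i p_i\le rk+r-1$ is meant to exploit: on top of the $k$ part-type classes already available in the graph case $r=2$, each color may instead be spent as a small class absorbing up to $r-1$ vertices that straddle several parts.

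Next I would argue by induction, taking a counterexample that minimizes $\sum_i p_i$ (and, among these, $k$), and importing the standard Ohba-type reductions while checking at each stage that the budget $\sum_i p_i\le rk+r-1$ and the side condition $p_i\ge r-1$ (with at most one exception) are preserved. The base case $k=1$ is immediate, since $K^r_{p_1}$ has no edges and every $1$-assignment is proper. If some color $c$ occurs in only one list $L(v)$, I assign $c$ to $v$ and delete $v$: no other vertex can receive $c$, so no monochromatic edge is ever created, and the residual hypergraph is again $r$-complete multipartite on fewer vertices. The main structural move is to peel off a part: if a part $V_i$ with $p_i\ge r$ admits a color $c$ common to all of its lists that can be forbidden elsewhere, I color $V_i$ monochromatically with $c$, delete $V_i$, remove $c$ from the remaining lists, and recurse on the $(k-1)$-partite hypergraph with lists of size $\ge k-1$; the arithmetic $rk+r-1-p_i\le r(k-1)+r-1$ is exactly what forces the threshold $p_i\ge r$. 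The slack $r-1$ in the hypothesis is precisely what lets these deletion and peeling steps absorb the crossing vertices.

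The difficulty is the irreducible case that survives all of these reductions: all parts are small, every color is shared by several lists, and Hall's condition for selecting a system of distinct ``part colors'' fails. Here the natural route, mirroring the argument of Noel, Reed and Wu \cite{noel2015} for the graph case, is to show that the number of proper $L$-colorings is strictly positive by comparing it with the (positive) number of colorings from the uniform list $\{1,\ldots,k\}$, and I expect this to be the main obstacle. Their counting treats every color class as an independent set, i.e.\ as a subset of a single part, whereas my Observation forces me to account simultaneously for the small crossing classes of size up to $r-1$; re-establishing the underlying Hall-defect inequality in this richer two-mode setting, and controlling the interaction between part-type and small classes, is exactly what the full generality demands. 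This is the step that keeps Conjecture \ref{ourmain} open in general, which is why the paper instead verifies the equivalent Conjecture \ref{regconj} only for two explicit families of $r$-complete multipartite hypergraphs.
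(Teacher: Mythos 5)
You have correctly diagnosed the situation, and it is worth stating plainly: the statement you were given is Conjecture \ref{regconj}, which the paper does not prove. The paper's only ``proof'' content attached to it is the observation that it is equivalent to Conjecture \ref{ourmain} (via Proposition \ref{regular} and the fact that every hypergraph with $\chi=k$ sits inside an $r$-complete $k$-partite hypergraph on the same vertex set), together with supporting evidence: the two families $K^r_{2r-1,r*(k-1)}$ (Theorem \ref{cc2rm1}) and $K^r_{(r+1)*(r-1),r*(k-r+1)}$ (Theorem \ref{ccrp1}), and the sharpness examples of Theorems \ref{noncc} and \ref{noncc2}. So there is no paper proof to compare against, and your proposal, read as a proof, has exactly the gap you yourself name: everything after your reductions --- the ``irreducible case'' --- is the entire content of the conjecture, and you supply no argument for it. The sound ingredients you do have are genuinely aligned with the paper's machinery: your \emph{Observation} (a color class is harmless iff it lies in one part or has size at most $r-1$) is the structural fact underlying Lemma \ref{ghall}, where classes of size $\le r-1$ are produced by a Hall-type matching into $r-1$ copies of the color set; your peeling step, with the threshold arithmetic $p_i\ge r$ so that $rk+r-1-p_i\le r(k-1)+r-1$, is precisely how Claim 1 of Theorem \ref{cc2rm1} and the common-color reduction in Proposition \ref{start} operate; and Lemmas \ref{eitheror} and \ref{small} are the paper's formalization of your ``budget'' bookkeeping.

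Two concrete corrections beyond the main gap. First, your single-occurrence-color deletion step can break the standing normalization: deleting a vertex from a part may create a second part of size $<r-1$, and the paper's convention (merging such parts, which changes $k$) must then be re-applied before the induction hypothesis is invoked; as stated, your minimal-counterexample setup does not track this. Second, your description of the graph case is inaccurate: Noel, Reed and Wu \cite{noel2015} do not prove Ohba's conjecture by comparing the number of $L$-colorings with the number of colorings from the uniform list $\{1,\ldots,k\}$; their proof is an intricate induction built on ``near-acceptable'' colorings and Hall-defect arguments. This matters for your program, because the step you defer to ``mirroring'' their method is not a counting argument that one could hope to perturb by adding the small crossing classes of size $\le r-1$; adapting their actual inductive scheme to the two-mode class structure your Observation forces is an open problem, not a routine extension --- which is, as you say, why the conjecture stands unproved and the paper settles only the two explicit families.
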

\section{Sharpness of Conjecture \ref{ourmain}}
It is well known that the condition of Ohba's Conjecture is sharp. Indeed, in \cite{enomoto2002} it was proved that the complete $k$-partite graph $G$ on $2k+2$ vertices is not chromatic-choosable if $k$ is even and either every part of $G$ has size 2 or 4, or every part of $G$ has size 1 or 3. In the following, we give an analogue of the former for $r$-uniform hypergraphs with $r\ge 3$ and a partial  generalization of the latter when $G=K_{3,3}$ to $r$-uniform hypergraphs with $r\ge 2$, indicating that the upper bound $r\chi(H)+r-1$ in Conjecture \ref{ourmain} is also sharp.
\begin{thm}\label{noncc}
For any integer $r\ge 3$, if  $k$ is a multiple of $r-1$ then
$$\chi_l(K_{2r,r*(k-1)}^r)>\chi(K_{2r,r*(k-1)}^r)=k.$$
\end{thm}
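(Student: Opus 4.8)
The plan is to produce an explicit $k$-list assignment $L$ for which $K_{2r,r*(k-1)}^r$ has no proper $L$-coloring; since $\chi(K_{2r,r*(k-1)}^r)=k$ by Proposition \ref{regular} (every part has at least $r-1$ vertices), this will give $\chi_l>\chi=k$. Writing $A$ for the part of size $2r$ and $B_1,\dots,B_{k-1}$ for the parts of size $r$, I would first record the reformulation that a coloring is proper exactly when every color class is contained in a single part or has at most $r-1$ vertices: if a class meets two parts and has at least $r$ vertices then any $r$ of them form a monochromatic edge, while in the opposite case no crossing $r$-set is monochromatic. Note that $|V(K_{2r,r*(k-1)}^r)|=2r+r(k-1)=r(k+1)=r\chi+r$, exactly one unit past the bound of Conjecture \ref{ourmain}, so the target is to show that this single extra vertex already destroys chromatic-choosability.

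For the construction I would generalize the ``parts of size $2$ or $4$'' examples of Enomoto et al.\ \cite{enomoto2002}. Fixing a palette, I would give each small part $B_j$ a system of $r$ lists with no color common to all of them (the $K_{3,3}$-type lists $\{1,2\},\{1,3\},\{2,3\}$ being the case $r=3$), so that no $B_j$ can be made monochromatic; consequently every color appearing on a small part must be used on at most $r-1$ vertices in total. The large part $A$ then receives ``transversal'' lists, built from a partition of the palette into blocks, chosen so that whatever colors the $B_j$'s are forced to spend reappear as the entire list of some vertex of $A$. The hypothesis $(r-1)\mid k$ enters here: it lets me split the $k$ blocks into $k/(r-1)$ groups of size $r-1$, matching the capacity $r-1$ of a spread class, so that the book-keeping below closes up to an exact equality rather than leaving room to maneuver.

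Finally I would verify that $L$ admits no proper coloring by contradiction. Given a supposed proper $L$-coloring, classify every used color as \emph{concentrated} (its class lies in one part) or \emph{spread} (its class has at most $r-1$ vertices). Because no $B_j$ is monochromatic, each color meeting a small part is spread, so all $r(k-1)$ small-part vertices, together with whatever vertices of $A$ are not taken by concentrated colors, must be covered by spread classes of capacity $r-1$; the transversal lists on $A$ are arranged so that this forces either a spread color onto $r$ vertices lying in two parts --- an illegal monochromatic edge --- or a vertex of $A$ whose whole list has already been exhausted by the small parts. Making this dichotomy exhaustive is the main obstacle. In the graph case $r=2$ a color shared between two parts is immediately forbidden (capacity $1$), which is what makes the classical transversal argument work; here a spread color may legitimately be shared by up to $r-1$ vertices, so the construction has to saturate exactly this slack and the counting has to be tight to the last vertex. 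This is precisely the role of $(r-1)\mid k$: it forces the governing inequality to fail by one rather than hold with room to spare, and arranging the color sharing so that every case lines up with the divisibility is where the real work lies.
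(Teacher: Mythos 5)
Your outline follows the same strategy as the paper's proof --- an explicit bad $k$-list assignment built from disjoint blocks of colors, the dichotomy between color classes contained in a single part and classes of size at most $r-1$, and a counting argument exploiting $(r-1)\mid k$ --- but it stops exactly where the proof has to happen, and you say so yourself (``making this dichotomy exhaustive is the main obstacle'', ``where the real work lies''). Two things are missing, and neither is routine. First, the list assignment is never actually defined: ``transversal lists \ldots chosen so that whatever colors the $B_j$'s are forced to spend reappear as the entire list of some vertex of $A$'' specifies what you want the lists to accomplish, not what the lists are, and the sentence about splitting ``the $k$ blocks into $k/(r-1)$ groups of size $r-1$'' does not parse into a construction (it is not even clear what the $k$ blocks are). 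Second, even granting a construction, the contradiction is never derived; you only assert that the lists can be ``arranged'' so that a case analysis closes. An argument whose central counting step is described as an obstacle still to be overcome is an outline, not a proof.

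One concrete ingredient is absent from your outline altogether: a bound on how many vertices of the large part $A$ can lie in a single \emph{concentrated} class. Your dichotomy controls every color that meets a small part (capacity $r-1$), but a color whose class lies inside $A$ is constrained in your setup only by $|A|=2r$, and without limiting such classes no global count can close (for instance, nothing you say rules out all of $A$ receiving one common color). The paper settles both issues with one choice: take $r$ disjoint blocks $C_1,\dots,C_r$ of size $k/(r-1)$ and give the $j$-th vertex of every small part, and \emph{both} vertices $u_j,v_j$ of the large part $V_1$, the list $\bigcup_{t\neq j}C_t$. Then (i) every class not inside $V_1$ has at most $r-1$ vertices, since a monochromatic $r$-set meeting two parts would be an edge and no class can swallow a whole small part; (ii) any $2r-1$ vertices of $V_1$ have no common color, so classes inside $V_1$ have at most $2r-2$ vertices; and (iii) writing $\ell$ for the number of vertices covered by classes inside $V_1$, the number of colors used is at least $\left\lceil (rk+r-\ell)/(r-1)\right\rceil+\left\lceil \ell/(2r-2)\right\rceil$, which exceeds the palette size $rk/(r-1)$ for every $0\le\ell\le 2r$ once $(r-1)\mid k$ (two cases: $\ell\le 2r-1$, and $\ell=2r$ where $r\ge 3$ is needed). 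That final inequality check is precisely the ``real work'' you deferred; until it, or something equivalent, is carried out, the theorem is not proved.
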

\begin{proof} Let $V_1,V_2,\ldots,V_k$ be the $k$ partite sets of $K_{2r,r*(k-1)}^r$, where
$$V_1=\{u_1,v_1,u_2,v_2,\ldots,u_r,v_r\}\ {\rm  and}\ V_i=\{w_{i,1},w_{i,2},\ldots,w_{i,r}\}$$
 for $i \in\{ 2,3,\ldots,k\}$. Let $C_1,C_2,\ldots,C_r$ be $r$ disjoint color sets of size $\frac{k}{r-1}$. Let $L$ be the $k$-list assignment of $K_{2r,r*(k-1)}^r$ defined by
$$L(w_{i,j})=L(u_j)=L(v_j)=\bigcup_{t=1,t\neq j}^{r} C_t,\ i=2,3,\ldots,k;\ j=1,2,\ldots,r.$$
We show that $K_{2r,r*(k-1)}^r$ is not $L$-colorable. Suppose to the contrary that $f\colon\,V\rightarrow \bigcup _{i=1}^{r} C_i$ is an $L$-coloring of $K_{2r,r*(k-1)}^r$, where $V=V_1\cup V_2\cup\cdots\cup V_k$. Define
$$S=\{c\in \bigcup_{j=1}^r C_j\colon\, f^{-1}(c)\not\subseteq V_1\} \text{~and~} T=\{c\in \bigcup_{j=1}^r C_j\colon\, f^{-1}(c)\subseteq V_1\}.$$
It is easy to check that $S\cup T$ is a bipartition of $\bigcup_{j=1}^r C_j$.

\noindent\textbf{Claim}: \emph{ $|f^{-1}(c)|\le r-1$ for each $c\in S$}.

Suppose to the contrary that $|f^{-1}(c)|>r-1$. Since $f^{-1}(c)\not\subseteq V_1$, there exists an  $r$-subset $W$ of $f^{-1}(c)$ such that $W\not\subseteq V_1$.  For any $i\in \{2,3,\ldots,k\}$, by the definition of $L$ we have $L(w_{i,1})\cap L(w_{i,2})\cap\cdots\cap L(w_{i,r})=\emptyset$. This means that $V_i$ has at least one vertex which is not assigned the color $c$ by $L$. Therefore, $W\neq V_i$, or equivalently, $W\not\subseteq V_i$ as $|W|=|V_i|$.  Combining with $W\not\subseteq V_1$, we have $W\not\subseteq V_i$ for all $i\in \{1,2,\ldots,k\}$. Thus, $W$ is an edge of $K_{2r,r*(k-1)}^r$. Further, since $f$ is a proper coloring, the edge $W$ is not monochromatic under $f$, which contradicts the fact that $W\subseteq f^{-1}(c)$.  This proves the claim.

Let $\ell=|\bigcup_{c\in T}f^{-1}(c)|$. Then $|\bigcup_{c\in S}f^{-1}(c)|=|V|-\ell=rk+r-\ell$. It follows from the above claim that $|S|\ge \lceil\frac{rk+r-\ell}{r-1}\rceil$. Since $\bigcap_{j=1}^rL(u_j)=\emptyset$ and $\bigcap_{j=1}^r L(v_j)=\emptyset$, any $2r-1$ vertices in $V_1$ share no common color in their lists. Thus, $|f^{-1}(c)|\le 2r-2$ for each $c\in T$ since $f^{-1}(c)\subseteq V_1$. Therefore, $|T|\ge \lceil\frac{\ell}{2r-2}\rceil$. Since $|\bigcup_{j=1}^r C_j|=\frac{rk}{r-1}$ and $S\cup T$ is a bipartition of $\bigcup_{j=1}^r C_j$, we have
$$\left\lceil\frac{rk+r-\ell}{r-1}\right\rceil+\left\lceil\frac{\ell}{2r-2}\right\rceil\le |S|+|T|\le \frac{rk}{r-1}.$$
As $k$ is a multiple of $r-1$, the above inequality can be reduced to
$$\left\lceil\frac{r-\ell}{r-1}\right\rceil+\left\lceil\frac{\ell}{2r-2}\right\rceil\le 0.$$

   On the other hand, notice that $\ell\le |V_1|=2r$. If $\ell\le 2r-1$ then
$$\left\lceil\frac{r-\ell}{r-1}\right\rceil+\left\lceil\frac{\ell}{2r-2}\right\rceil
 \ge  \frac{r-\ell}{r-1}+\frac{\ell}{2r-2}
   = \frac{2r-\ell}{2r-2}
  > 0,
$$
 a contradiction. If $\ell=2r$ then
$$
\left\lceil\frac{r-\ell}{r-1}\right\rceil+\left\lceil\frac{\ell}{2r-2}\right\rceil
  =   \left\lceil\frac{-r}{r-1}\right\rceil+\left\lceil\frac{2r}{2r-2}\right\rceil
  \geq -1+2
  > 0,
$$
where `$\geq$' holds as $r\ge 3$. This is again a contradiction and hence completes the proof of the theorem.\end{proof}

\begin{thm}\label{noncc2}
For any integer $r\ge 2$,
$$\chi_l(K_{(r+1)*r}^r)>\chi(K_{(r+1)*r}^r)=r.$$
\end{thm}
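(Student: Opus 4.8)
The plan is to prove non-$r$-choosability by exhibiting an explicit $r$-list assignment $L$ for which $K_{(r+1)*r}^r$ admits no proper $L$-coloring. Write the $r$ partite sets as $V_i=\{v_{i,0},v_{i,1},\ldots,v_{i,r}\}$, $i\in\{1,\ldots,r\}$, each of size $r+1$, and fix a color set $\mathcal{C}=\{c_0,c_1,\ldots,c_r\}$ of size $r+1$. I would assign to each vertex the list $L(v_{i,j})=\mathcal{C}\setminus\{c_j\}$, so that within every part the $r+1$ vertices have lists omitting pairwise distinct colors. For $r=2$ this is exactly the classical list assignment witnessing that $K_{3,3}$ is not $2$-choosable, and the construction is the natural hypergraph lift of it.

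The first step is to characterize which color classes are admissible, i.e.\ contain no monochromatic edge. Since an $r$-set is an edge precisely when it is not contained in a single part, a set $X$ of vertices is free of monochromatic edges if and only if either $|X|\le r-1$ or $X$ lies inside one part: if $|X|\ge r$ and $X$ meets two parts, one can pick a vertex from each of the two parts and extend to an $r$-subset of $X$, producing a monochromatic edge. With this in hand, the crucial observation is that $c_j\notin L(v_{i,j})$ for every $i$, so color $c_j$ is barred from the $r$ vertices $v_{1,j},\ldots,v_{r,j}$, one in each part. Consequently, inside any single part only $r$ vertices may receive $c_j$; together with the admissibility dichotomy (a class spanning two or more parts has size at most $r-1$), this yields the uniform bound $|f^{-1}(c_j)|\le r$ for every color in any proper $L$-coloring $f$.

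The finish is a saturation/pigeonhole argument. Summing over colors gives
\[
r(r+1)=|V(K_{(r+1)*r}^r)|=\sum_{j=0}^{r}|f^{-1}(c_j)|\le (r+1)\,r,
\]
so equality must hold throughout: every color class has size exactly $r$, which (since a class meeting two parts has size at most $r-1$) forces each $c_j$ to lie entirely within a single part, occupying all $r$ admissible vertices there. Now there are $r+1$ colors but only $r$ parts, so two distinct colors $c_j,c_{j'}$ are confined to the same part $V_i$. Their classes are disjoint and each has size $r$, requiring $2r$ vertices inside $V_i$; but $|V_i|=r+1<2r$ for every $r\ge 2$. This contradiction shows $K_{(r+1)*r}^r$ is not $L$-colorable, hence $\chi_l(K_{(r+1)*r}^r)>r$; the equality $\chi(K_{(r+1)*r}^r)=r$ is Proposition~\ref{regular}.

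I expect the main obstacle to be conceptual rather than computational. Unlike the graph case $r=2$, for $r\ge 3$ two vertices in different parts need not form an edge, so colors may legitimately be shared across several parts (up to $r-1$ vertices), and the clean ``the two parts use disjoint color sets'' reasoning available for $K_{3,3}$ no longer applies. The key to overcoming this is the list design: forbidding each color on exactly one vertex per part caps every color class at $r$ vertices, which makes the global vertex count saturate exactly and drives the pigeonhole. Getting the admissible-class characterization and this tight per-color bound right is the heart of the argument.
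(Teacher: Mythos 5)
Your proposal is correct and is essentially the paper's own proof: the same list assignment $L(v_{i,j})=\{1,\ldots,r+1\}\setminus\{j\}$, the same per-color bound $|f^{-1}(c)|\le r$, the same saturation count forcing every class to have size exactly $r$ and hence to lie in a single part, and the same pigeonhole contradiction $2r>r+1$. Your explicit dichotomy for edge-free sets (size at most $r-1$ or contained in one part) is a slightly more careful justification of the bound that the paper states in compressed form, but the argument is the same.
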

\begin{proof}
Let $H=K_{(r+1)*r}^r$ with $r$ partite sets $V_1,V_2,\ldots,V_r$, where $$V_i=\{v_{i,1},v_{i,2},\ldots,v_{i,r+1}\}{\rm{~for~}} i\in\{1,2,\ldots,r\}.$$ Let $L$ be the $r$-list assignment of $H$ defined by $L(v_{i,j})=\{1,2,\ldots,r+1\}\setminus \{j\}$ for $i\in \{1,2,\ldots,r\}$ and $j\in\{1,2,\ldots,r+1\}$. We show that $H$ is not $L$-colorable.

 Suppose to the contrary that $f\colon\, V(H)\rightarrow \{1,2,\ldots,r+1\}$ is an $L$-coloring of $H$. Then we have
 \begin{equation}\label{sumf}
 |f^{-1}(1)|+ |f^{-1}(2)|+\cdots+ |f^{-1}(r+1)|=|V(H)|=(r+1)r.
 \end{equation}
On the other hand, for any $i\in\{1,2,\ldots,r\}$, the lists of all $r+1$ vertices in $V_i$ have an empty intersection. Thus,  $|f^{-1}(k)|\le r$ for $k\in\{1,2,\ldots,r+1\}$. This, combining with (\ref{sumf}), implies that  $|f^{-1}(k)|= r$ for $k\in\{1,2,\ldots,r+1\}$. Therefore, for each $k\in \{1,2,\ldots, r+1\}$, $f^{-1}(k)$ must be contained in $V_i$ for some $i\in\{1,2,\ldots,r\}$ since otherwise  $f^{-1}(k)$ is an edge in $H$. By the pigeonhole principle, there exist two color classes, say $f^{-1}(1)$ and $f^{-1}(2)$, contained in the same partite set, say $V_1$. Consequently, $|f^{-1}(1)|+|f^{-1}(2)|=2r>r+1=|V_1|$. This is a contradiction and hence completes the proof.
\end{proof}
\section{Support for Conjecture \ref{ourmain}}
 We begin with some lemmas that are necessary for our forthcoming argument.

For an $r$-hypergraph $H$ and a subset $X\subseteq V(H)$, we denote by $H[X]$ the subgraph of $H$ induced by $X$, i.e., $H[X]=(X,\{e\colon\, e\in E(H), e\subseteq X\})$. For a list assignment $L$ of $H$, let $L(X)=\bigcup_{v\in X} L(v)$ and let $L_X$ denote $L$ restricted to $X$. We may omit the subscript of $L_X$ when there is no ambiguity. For example, when  $H[X]$ is $L_X$-colorable we simply say that $H[X]$ is $L$-colorable. For a color set $C$, let $L\setminus C$ be the list assignment of $H$ defined by $(L\setminus C)(v)=L(v)\setminus C$ for each vertex $v\in V(H)$.
\begin{lem}\label{comb}
 Let $X\cup Y=V(H)$ be a bipartition of the vertex set of an $r$-uniform hypergraph $H$ and $f$ be an $L$-coloring of $H[X]$. If there is a color set $C$ such that $C\supseteq f(X)$ and $H[Y]$ is $L\setminus C$-colorable, then $H$ is $L$-colorable.
\end{lem}
\begin{proof}
Let $g$ be an $L\setminus C$-coloring of $H[Y]$. Define a coloring $h$ of $H$ by $h(v)=f(v)$ if $v\in X$, and $h(v)=g(v)$ if $v\in Y$. One can easily check that $h$ is an $L$-coloring of $H$.
\end{proof}
\begin{lem}\label{ghall}
Let $L$ be a list assignment of an $r$-uniform hypergraph $H$. If $(r-1)|L(X)|\ge |X|$ for each nonempty subsets $X\subseteq V(H)$, then $H$ is $L$-colorable.
\end{lem}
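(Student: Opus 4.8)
The plan is to reduce the statement to the defect (Hall-type) form of the marriage theorem. First I would record the key structural observation: since $H$ is $r$-uniform, every edge has exactly $r$ vertices, so any color class of size at most $r-1$ contains no edge and is automatically independent. Consequently it suffices to produce an assignment of a color from $L(v)$ to each vertex $v$ in which every color is used at most $r-1$ times; such a coloring has all color classes of size $\le r-1$ and is therefore a proper $L$-coloring of $H$.

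Next I would encode the search for such an assignment as a bipartite matching problem. Build an auxiliary bipartite graph $B$ whose one side is $V(H)$ and whose other side consists of $r-1$ copies $c^{(1)},\ldots,c^{(r-1)}$ of each color $c\in L(V(H))$, joining $v$ to all copies of every color in $L(v)$. A matching of $B$ saturating $V(H)$ then assigns to each vertex $v$ a copy of a color in its list, and since each color has only $r-1$ copies available, every color is used at most $r-1$ times — which is exactly the assignment sought in the first step.

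It then remains to check Hall's condition for $B$. For a nonempty $X\subseteq V(H)$, the neighborhood of $X$ in $B$ is precisely the set of all copies of the colors in $L(X)$, which has size $(r-1)|L(X)|$. Thus Hall's condition $|N(X)|\ge|X|$ reads $(r-1)|L(X)|\ge|X|$, which is exactly the hypothesis of the lemma (and is vacuous for $X=\emptyset$). By Hall's theorem $B$ admits a matching saturating $V(H)$, and the coloring it induces is the desired proper $L$-coloring.

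I do not expect a substantial obstacle here. The only points requiring care are the translation of the hypothesis into Hall's condition via the $(r-1)$-fold blow-up of the color side, and the initial observation that, in the $r$-uniform setting, bounding each color class by $r-1$ is enough to rule out every monochromatic edge. Once these are in place, the conclusion follows immediately from the marriage theorem.
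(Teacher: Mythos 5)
Your proposal is correct and follows essentially the same argument as the paper: both construct the auxiliary bipartite graph with $r-1$ copies of each color on one side, verify Hall's condition directly from the hypothesis, and observe that the resulting matching yields color classes of size at most $r-1$, which cannot contain an edge in an $r$-uniform hypergraph.
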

\begin{proof}
Consider the bipartite graph $B$ with vertex partition $V(B)=(V(H),$ $C)$, where $C$ consists of $(r-1)$ copies of $L(V(H))$ and, for each $v\in V(H)$, $v$ is adjacent to the $(r-1)$ copies of $L(v)$. Clearly, for each $X\subseteq V(H)$, we have $N_B(X)=(r-1)L(X)$ and hence $|N_B(X)|\ge |X|$ by the condition of the lemma. Thus, by Hall's Matching Theorem, there exists a matching $M$ that saturates $V(H)$. We associate $M$ with an $L$-coloring $f_M$ of $H$ defined by $f_M(v)=c_M(v)$ for any $v\in V(H)$, where $c_M(v)$ is the color matched to $v$ by $M$. We can see that each vertex $v$ is colored by a color from its own list $L(v)$, and each color class of $H$ induced by $f_M$ contains at most $r-1$ vertices. This means that each edge of $H$ contains at least two vertices with different colors since $H$ is $r$-uniform.  Thus, $f_M$ is proper and therefore, $H$ is $L$-colorable.
\end{proof}
The next two lemmas are the extensions of two methods which are frequently used in the study of the list colorings for ordinary graphs. Our proof follows the techniques given in \cite{Kierstead,Reed},  with only slight modifications.
\begin{lem}\label{eitheror}
For a list assignment $L$ of an $r$-uniform hypergraph $H$, if $H[X]$ is $L$-colorable for each nonempty subset $X\subseteq V(H)$ with $(r-1)|L(X)|< |X|$, then  $H$ is $L$-colorable.
\end{lem}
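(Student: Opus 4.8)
The statement of Lemma \ref{eitheror} is a dichotomy: either $H$ itself has a "bad" subset $X$ (one with $(r-1)|L(X)|<|X|$, so that Hall's condition fails locally), in which case we are handed colorability of $H[X]$ as a hypothesis; or no such bad subset exists, in which case Lemma \ref{ghall} applies directly. So the plan is to split on whether $V(H)$ contains a nonempty subset violating the Hall-type inequality $(r-1)|L(X)|\ge |X|$, and to stitch the two regimes together using Lemma \ref{comb}.

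First I would dispose of the easy case. If \emph{every} nonempty $X\subseteq V(H)$ satisfies $(r-1)|L(X)|\ge |X|$, then Lemma \ref{ghall} immediately gives that $H$ is $L$-colorable and we are done. So assume some nonempty subset fails this inequality, and choose $X$ to be a \emph{maximal} such subset, i.e. a maximal nonempty set with $(r-1)|L(X)|<|X|$. By hypothesis $H[X]$ is $L$-colorable; let $f$ be an $L$-coloring of $H[X]$, and set $C=f(X)\subseteq L(X)$. My aim is to apply Lemma \ref{comb} with this bipartition $V(H)=X\cup Y$ where $Y=V(H)\setminus X$, so I must verify that $H[Y]$ is $(L\setminus C)$-colorable.

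The heart of the argument is to show that the restricted assignment $L\setminus C$ on $Y$ satisfies the Hall-type condition of Lemma \ref{ghall}, namely $(r-1)|(L\setminus C)(Z)|\ge |Z|$ for every nonempty $Z\subseteq Y$. Here I would exploit the maximality of $X$: for any nonempty $Z\subseteq Y$, the set $X\cup Z$ strictly contains $X$, so by maximality it does \emph{not} violate the inequality, giving $(r-1)|L(X\cup Z)|\ge |X\cup Z|=|X|+|Z|$. Now $|L(X\cup Z)|=|L(X)|+|(L\setminus L(X))(Z)|\le |L(X)|+|(L\setminus C)(Z)|$ since $C\subseteq L(X)$, and combining with the bad inequality $(r-1)|L(X)|<|X|$ should yield $(r-1)|(L\setminus C)(Z)|\ge |Z|$ after cancelling the $X$-terms. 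The point is that the deficiency contributed by $X$ is absorbed exactly because $|C|\le |L(X)|$ and because $X$ is bad while $X\cup Z$ is not.

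The main obstacle is making the counting in the previous step precise, in particular tracking how many colors of $C$ can lie in $L(Z)$ and confirming that the inequalities combine in the right direction; one has to be careful that it is $|L(X)|$ (not $|C|$) that appears with the correct sign, and that the strict inequality for $X$ versus the non-strict inequality for $X\cup Z$ are used consistently. Once that Hall-type bound on $Y$ is established, Lemma \ref{ghall} furnishes an $(L\setminus C)$-coloring of $H[Y]$, and then Lemma \ref{comb} (with $C\supseteq f(X)$, which holds by construction since $C=f(X)$) combines $f$ and this coloring into an $L$-coloring of all of $H$, completing the proof.
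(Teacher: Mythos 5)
Your proposal is correct and takes essentially the same route as the paper's proof: dispose of the easy case via Lemma \ref{ghall}, take a maximal deficient set $X$, use maximality to verify the Hall-type condition on $Y=V(H)\setminus X$ for the truncated lists, and combine via Lemma \ref{comb}. The one difference is that the paper chooses $C=L(X)$ rather than $C=f(X)$, which turns your approximate count into the exact identity $|L(X\cup S)|=|L(X)|+|(L\setminus C)(S)|$ and dissolves the ``main obstacle'' you flag; your smaller choice of $C$ also goes through, since $(L\setminus C)(Z)\supseteq (L\setminus L(X))(Z)$ and the strict inequality $(r-1)|L(X)|<|X|$ absorbs the slack in the right direction.
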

\begin{proof}
If  $(r-1)|L(X)|\ge |X|$ for each nonempty subset $X\subseteq V(H)$, then we are done by Lemma \ref{ghall}. We now assume that $X$ is a maximal nonempty subset of  $V(H)$ such that $(r-1)|L(X)|< |X|$.  Let $C=L(X),Y=V(H)\setminus X$ and let $S$ be an arbitrary nonempty subset of $Y$. Then by the maximality of $X$, $(r-1)|L(X\cup S)|\geq |X\cup S|$. On the other hand, notice that  $|L(X\cup S)|=|L(X)|+|(L\setminus C)(S)|$  and $|X\cup S|=|X|+|S|$ as $X\cap S=\emptyset$. So we have $(r-1)|(L\setminus C)(S)|\ge |S|$. Consequently,  $H[Y]$ is $L\setminus C$-colorable by Lemma \ref{ghall}. Let $f$ be any $L$-coloring of $H[X]$. Clearly, $L(X)\supseteq f(X)$, that is, $C\supseteq f(X)$. Therefore, $H$ is $L$-colorable by Lemma \ref{comb}.
\end{proof}
\begin{lem}\label{small}
An $r$-uniform hypergraph $H$ is $k$-choosable if $H$ is $L$-colorable for every $k$-list assignment $L$ such that $(r-1)|L(V(H))|<|V(H)|$.
\end{lem}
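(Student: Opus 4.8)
The plan is to reduce, by means of Lemma \ref{eitheror}, the question of $L$-colorability of $H$ under an \emph{arbitrary} $k$-list assignment to the $L$-colorability of certain induced subhypergraphs under list assignments that use very few colors, and then to recognize each such subproblem as the restriction of a small-palette list assignment of the \emph{whole} of $H$, which is precisely where the hypothesis bites.

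First I would fix an arbitrary $k$-list assignment $L$ of $H$ and aim to show that $H$ is $L$-colorable; as $L$ is arbitrary, this yields $k$-choosability. By Lemma \ref{eitheror} it suffices to prove that $H[X]$ is $L$-colorable for every nonempty $X\subseteq V(H)$ satisfying $(r-1)|L(X)|<|X|$. So I fix such an $X$. The key step is to manufacture a $k$-list assignment $\tilde L$ of all of $H$ that agrees with $L$ on $X$ but uses no color outside $L(X)$: set $\tilde L(v)=L(v)$ for $v\in X$, and for each $v\in V(H)\setminus X$ let $\tilde L(v)$ be an arbitrary $k$-element subset of $L(X)$. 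This is legitimate because $L$ is a $k$-list assignment and $X$ is nonempty, so $|L(X)|\ge k$ and such subsets exist. By construction $\tilde L(V(H))=L(X)$, whence $(r-1)|\tilde L(V(H))|=(r-1)|L(X)|<|X|\le|V(H)|$, so $\tilde L$ meets the hypothesis of the lemma and $H$ is $\tilde L$-colorable. Restricting any such coloring to $X$ gives a coloring of $H[X]$ that is proper (every edge of $H[X]$ is an edge of $H$) and that respects $\tilde L|_X=L|_X$, i.e.\ an $L$-coloring of $H[X]$. This verifies the hypothesis of Lemma \ref{eitheror}, and the proof is complete.

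The only real obstacle is keeping the reduction honest: one must check that $\tilde L$ is genuinely a $k$-list assignment, which forces the elementary but essential observation that $|L(X)|\ge k$, and that its global palette is exactly $L(X)$, so that the small-palette hypothesis is actually triggered. Once this is in place, the fact that the restriction of a proper coloring of $H$ to an induced subhypergraph is again proper requires no further argument, and I expect no difficulties beyond this bookkeeping.
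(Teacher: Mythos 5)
Your proof is correct and takes essentially the same approach as the paper: both construct an auxiliary $k$-list assignment that agrees with $L$ on $X$ and whose global palette is exactly $L(X)$, invoke the hypothesis of the lemma to color $H$, restrict that coloring to $H[X]$, and finish with Lemma \ref{eitheror}. The only (immaterial) difference is that the paper fills the lists outside $X$ with the single fixed list $L(x)$ for some $x\in X$, whereas you allow arbitrary $k$-element subsets of $L(X)$.
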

\begin{proof}
Let $L$ be an arbitrary $k$-list assignment of $H$. Let $X$ be a nonempty subset $X\subseteq V(H)$ such that
$(r-1)|L(X)|< |X|$ and let $x\in X$. Define the list assignment $L'$ of $H$ by $L'(v)=L(v)$ if $v\in X$ and $L'(v)=L(x)$ otherwise. Clearly,  $L'(V(H))=L(X)$ and hence $(r-1)|L'(V(H))|<|X|\le |V(H)|$. Thus by the condition of the lemma, $H$ is $L'$-colorable and so is $H[X]$. Since $L'_X=L_X$, $H[X]$ is  $L$-colorable. So by Lemma \ref{eitheror}, $H$ is $L$-colorable.
\end{proof}

Gravier and Maffray \cite{gravier} showed that $K_{3,2*(k-1)}$ is chromatic-choosable, which gave a support to Ohba's conjecture before the conjecture was proved. The following two theorems are the generalizations of this result to uniform hypergraphs and therefore, give a support to Conjecture \ref{ourmain}.

For a color $c$ of $L$ and a vertex subset $X$ of $H$, the {\it multiplicity} of $c$  in $X$ is defined by $|\{v:v\in X,c\in L(v)\}|$, that is, the total times of $c$ that appears in the lists of the vertices in $X$. For a list assignment $L$, the multiplicity of $c$ in $X$ is denoted by $\eta_{L,X}(c)$, or simply $\eta_X(c)$ when the list assignment is clear.
\begin{thm}\label{cc2rm1}
$\chi_l(K_{2r-1,r*(k-1)}^r)=k$ for $r\ge 2$ and $k\ge 1$.
\end{thm}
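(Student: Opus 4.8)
The plan is to prove the nontrivial inequality $\chi_l(H)\le k$ for $H=K_{2r-1,r*(k-1)}^r$; the reverse inequality is immediate from $\chi(H)=k$ (Proposition \ref{regular}), since $|V(H)|=rk+r-1$. Throughout I would use the characterization that a color class $f^{-1}(c)$ is admissible in a proper coloring of $H$ exactly when either $|f^{-1}(c)|\le r-1$ or $f^{-1}(c)$ is contained in a single partite set: any set meeting at least two parts and having at least $r$ vertices contains an $r$-subset spanning two parts, which is an edge, so no other large classes are allowed.

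First I would invoke Lemma \ref{small} to restrict attention to $k$-list assignments $L$ with $(r-1)|L(V(H))|<|V(H)|=rk+r-1$; writing $m=|L(V(H))|$, this forces $(r-1)m\le rk+r-2$, so the number of available colors exceeds $k$ by only a bounded amount. In this regime of few colors the lists are nearly complete, and the natural target is to color as many parts as possible monochromatically (each whole part being admissible) and to handle the remaining vertices with classes of size at most $r-1$ via the Hall-type Lemma \ref{ghall}, gluing the two partial colorings together with Lemma \ref{comb}.

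I would then run an induction on $k$ (the base case $k=1$ is an edgeless hypergraph, hence trivially $1$-choosable). For the inductive step I distinguish two situations. If some small part $V_i$ ($i\ge 2$) has a color $c\in\bigcap_{v\in V_i}L(v)$, I color $V_i$ monochromatically with $c$, delete $V_i$ and remove $c$ from every list, and note that the residue is $K_{2r-1,r*(k-2)}^r$ carrying a $(k-1)$-list assignment, so the inductive hypothesis finishes the coloring via Lemma \ref{comb}. The complementary situation—no small part admits a common color—is where the large part having size exactly $2r-1$ must be used: now parts must be split into several admissible classes of size $\le r-1$, and I would argue that the budget $(r-1)m\le rk+r-2$ together with the absence of common colors yields the Hall inequality $(r-1)|L(X)|\ge |X|$ for every nonempty $X\subseteq V(H)$, or else that Lemma \ref{eitheror} reduces the task to strictly smaller, color-deficient induced sub-hypergraphs to which the same analysis applies.

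The main obstacle is precisely this complementary case, and it is the tight one: enlarging $V_1$ from $2r-1$ to $2r$ produces exactly the non-colorable family of Theorem \ref{noncc}, so any correct argument must collapse upon adding a single vertex to $V_1$. Concretely, the hard point is to verify $(r-1)|L(X)|\ge |X|$ uniformly in $X$, the worst case being sets concentrated in $V_1$ and in a few small parts with heavily overlapping lists, and to show that whenever the inequality fails the offending $X$ still contains a part that can be colored monochromatically inside $X$, enabling the either-or reduction. Controlling the bookkeeping—colors spent on monochromatic parts versus colors spent on the remainder classes of size $\le r-1$, so that their total never exceeds $m$—is the crux of the proof.
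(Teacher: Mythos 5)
There is a genuine gap, and it sits exactly where you locate ``the main obstacle.'' Your reduction via Lemma \ref{small} and your first case (a small part with a common color: color it monochromatically, delete it, and induct) match the paper's proof. But your plan for the complementary case is circular and cannot succeed as stated. After invoking Lemma \ref{small} you are, by construction, working with a list assignment satisfying $(r-1)|L(V(H))|<|V(H)|$; hence the Hall inequality $(r-1)|L(X)|\ge |X|$ that you propose to ``verify uniformly in $X$'' fails outright for $X=V(H)$, so it can never hold for all nonempty $X$. For the same reason Lemma \ref{eitheror} gives no reduction here: its hypothesis requires $H[X]$ to be $L$-colorable for \emph{every} deficient set $X$, and $V(H)$ itself is deficient, so the hypothesis already contains the very conclusion you want. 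Your proposed rescue---that a failing $X$ ``still contains a part that can be colored monochromatically inside $X$''---also breaks down: in this case no small part has a common color by assumption, and nothing forces the big part $V_1$ (which would need one color present in all $2r-1$ of its lists) to have one either.

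The missing idea, which is the heart of the paper's proof, is that you do not need a common color of an entire part: it suffices to find a color $\bar{c}$ appearing in the lists of at least $r$ of the $2r-1$ vertices of $V_1$. Such a color exists by pigeonhole, because $\sum_{v\in V_1}|L(v)|=(2r-1)k$ while the color budget forces $|L(V_1)|\le \left\lfloor\frac{rk+r-2}{r-1}\right\rfloor$; proving that the resulting multiplicity is at least $r$ (in fact at least $r$ plus a fractional correction term, which is needed later) is a genuinely nontrivial numerical estimate (the paper's Claim 4, via a concavity argument). One then colors $X=\{v\in V_1\colon \bar{c}\in L(v)\}$ monochromatically with $\bar{c}$---legal because $X$ lies inside a single part---leaving at most $r-1$ vertices of $V_1$ uncolored. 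Only for the \emph{remaining} hypergraph, with lists $L'=L\setminus\{\bar{c}\}$, does the Hall condition $(r-1)|L'(S)|\ge |S|$ become verifiable for all $S$ (the paper's Claim 5), and even that verification needs the further observation that each small part must see exactly $\left\lfloor\frac{rk+r-2}{r-1}\right\rfloor$ colors, since otherwise it would have a common color. Without this popular-color step your argument has no way to get off the ground in the main case.
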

\begin{proof} We prove it by contradiction. Suppose $k$ is the minimal positive integer such that $K_{2r-1,r*(k-1)}^r$ is not $k$-choosable. Note that if $k=1$ then $K_{2r-1,r*(k-1)}^r$ contains no edges and therefore is trivially $1$-choosable. Thus $k\ge 2$. Write $H= K_{2r-1,r*(k-1)}^r$. Since $H$ is not $k$-choosable, Lemma \ref{small} implies that there exists a $k$-list assignment $L$ such that $(r-1)|L(V(H))|<|V(H)|$ and $H$ is not $L$-colorable. Let  $V_1,V_2,\ldots,V_k$ be all partite sets of $H$, where $|V_1|=2r-1$ and $|V_i|=r$ for $i\in\{2,3,\ldots,k\}$. As $(r-1)|L(V(H))|<|V(H)|=rk+r-1$ and $L(V_i)\subseteq L(V(H))$, we have $(r-1)|L(V_i)|\le  rk+r-2$ and hence
\begin{equation}\label{boundonLV}
|L(V_i)|\le \left\lfloor\frac{rk+r-2}{r-1}\right\rfloor,\ i=1,2,\ldots,k.
\end{equation}

\noindent\textbf{Claim 1}: \emph{$\bigcap_{v\in V_i}L(v)=\emptyset$ for each $i\in \{2,3,\ldots,k\}$.}

Suppose to the contrary that there exists a color $c^*\in \bigcap_{v\in V_i}L(v)$ for some $i\in\{2,3,\ldots,k\}$. We use $c^*$ to color all vertices in $V_i$ and let $Y=V(H)\setminus V_i$. Note that $H[Y]=K_{2r-1,r*(k-2)}^r$. By the minimality of $k$, $H[Y]$  is $(k-1)$-choosable. Therefore, $H[Y]$ is $L\setminus\{c^*\}$-colorable since $(L\setminus\{c^*\})(v)$ contains at least $k-1$ colors for each $v\in Y$. So by Lemma \ref{comb}, $H$ is $L$-colorable. This is a contradiction and hence  Claim 1 follows.

Let
\begin{equation}
\xi=\left\lceil\frac{(2r-1)k}{\left\lfloor\frac{rk+r-2}{r-1}\right \rfloor}\right\rceil.
\end{equation}

\noindent\textbf{Claim 2}: \emph{$L$ has a color $\bar{c}$ such that $\eta_{V_1}(\bar{c})\ge \xi$}.

Clearly, $\sum_{c\in L(V_1)}\eta_{V_1}(c)=\sum_{v\in V_1}|L(v)|=(2r-1)k$.  Let $\bar{c}$ be the color such that $\eta_{V_1}(\bar{c})$ is maximum. By (\ref{boundonLV})  we have
$$\eta_{V_1}(\bar{c})
\ge\frac{\sum_{v\in V_1}|L(v)|}{|L(V_1)|}
\ge\frac{(2r-1)k}{\lfloor\frac{rk+r-2}{r-1}\rfloor},
$$
which implies  $\eta_{V_1}(\bar{c})\ge \xi$. Thus,  Claim 2 follows.

\noindent\textbf{Claim 3}:  \emph{$|L(V_i)|= \left\lfloor\frac{rk+r-2}{r-1}\right\rfloor$ for each $i\in \{2,3,\ldots,k\}$.}

Suppose to the contrary that  $|L(V_i)|\neq\left\lfloor\frac{rk+r-2}{r-1}\right\rfloor$ for some $i\in\{2,3,\ldots,k\}$. Then, by (\ref{boundonLV}), $|L(V_i)|\le\left\lfloor\frac{rk+r-2}{r-1}\right\rfloor-1$ and hence $|L(V_i)|\le\frac{rk-1}{r-1}$.  Let $c_i$ be the color in $L(V_i)$ such that $\eta_{V_i}(c_i)$ is maximum. By an argument similar to the proof of  Claim 2, we have
$$\eta_{V_i}(c_i)
 \ge  \frac{\sum_{c\in L(V_i)}\eta_{V_i}(c)}{|L(V_i)|}
  =  \frac{rk}{|L(V_i)|}
  \ge \frac{rk}{\frac{rk-1}{r-1}}
 > r-1.
$$
This means that all vertices in $V_i$ have a common color in their lists. This contradicts Claim 1 and therefore, Claim 3 follows.

\noindent\textbf{Claim 4}: $\xi\ge r+ (r-1)\left(\frac{rk+r-2}{r-1}-\left\lfloor\frac{rk+r-2}{r-1}\right\rfloor\right)$ and in particular, $\xi\ge r$.

Write $k-1=(r-1)p+q$, where $p=\lfloor\frac{k-1}{r-1}\rfloor$ and $0\le q\le r-2$. Then $\frac{rk+r-2}{r-1}=k+1+\frac{k-1}{r-1}=rp+q+2+\frac{q}{r-1}$. Thus, the first inequality in the claim is reduced to
\begin{equation}
\left\lceil\frac{(2r-1)((r-1)p+q+1)}{rp+q+2}\right\rceil\ge r+q,
\end{equation}
that is,
 \begin{equation}
(2r-1)((r-1)p+q+1)>(rp+q+2)(r+q-1).
\end{equation}

Let $\Delta=(2r-1)((r-1)p+q+1)-(rp+q+2)(r+q-1)=-q^2+(r-2-pr)q+(1+p-2pr+pr^2)$. In order to show $\Delta>0$ we consider the quadratic function $f(x)=-x^2+(r-2-pr)x+(1+p-2pr+pr^2)$. Note that $0\le q\le r-2$ and $\Delta=f(q)$.   As $f(x)$ is strictly concave on the interval $[0,r-2]$, the minimum value of $f(x)$ must be attained at $x=0$ or $r-2$. Direct calculation leads to
$f(0)=1+p-2pr+pr^2=rp(r-2)+p+1>0$ and $f(r-2)=p+1>0$. Therefore, $f(x)>0$ on $[0,r-2]$ and  hence $\Delta>0$. This proves Claim 4.

Let $X=\{v\in V_1\colon\,\bar{c}\in L(v)\}$.  Let $Y=V(H)\setminus X$, $V_1'=V_1\setminus X$ and $L'=L_Y\setminus\{\bar{c}\}$. Then by Claims 2 and  4,  we have $|X|\ge \xi\geq r$ and therefore,
\begin{equation}\label{V1p}
|V_1'|=|V_1\setminus X|\le 2r-1-\xi\le r-1.
\end{equation}
Clearly, $|L'(v)|=|L(v)|=k$ for each $v\in V_1'$,  and $|L'(v)|\ge |L(v)|-1=k-1$ for each $v\in V_i$, $i\in\{2,3,\ldots,k\}$.

\noindent\textbf{Claim 5}:  \emph{$H[Y]$ is $L'$-colorable}.

Let $S$ be an arbitrary nonempty subset of $Y$. By Lemma \ref{ghall}, it suffices to show that $(r-1)|L'(S)|\ge |S|$. To this end, we consider two cases.

\noindent\emph{Case 1}:  $V_i\not\subseteq S$ for any $i\in \{2,3,\ldots,k\}$.

In this case, we have
 \begin{equation}\label{capp}
 |S\cap (V(H)\setminus V_1)|\le (|V_2|-1)+\cdots+(|V_k|-1)=(r-1)(k-1).
\end{equation}\label{cap}
 Notice that $|L'(S)|\ge |L'(v)|\ge k-1$ for any vertex $v$ in $S$. So by (\ref{capp}), if $S\cap V_1'=\emptyset$ then $(r-1)|L'(S)|\ge(r-1)(k-1)|\geq |S\cap (V(H)\setminus V_1)|=|S|$, as desired. Now we assume that $S\cap V_1'\not=\emptyset$.
Then by  (\ref{V1p}) and (\ref{capp}) we have $|S|=|(S\cap V'_1)\cup (S\cap (V(H)\setminus V_1))|\le (r-1)+(r-1)(k-1)=(r-1)k$. Let $v\in S\cap V_1'$. Then $|L'(v)|=k$ and hence $|L'(S)|\ge k$. Again we have $(r-1)|L'(S)|\ge|S|$.

\noindent\emph{Case 2}: $V_i\subseteq S$ for some $i\in\{2,3,\ldots,k\}$.

By Claim 3, $L'(V_i)\ge  \left\lfloor\frac{rk+r-2}{r-1}\right\rfloor-1$.   On the other hand, by the first inequality in (\ref{V1p}),
$|S|\le |V_1'|+|V_2|+\cdots+|V_k|\le 2r-1-\xi+r(k-1)$. Therefore, by Claim 4,
\begin{eqnarray*}\label{diffPlP}
(r-1)|L'(S)|-|S|&\ge&(r-1)\left(\left\lfloor\frac{rk+r-2}{r-1}\right\rfloor-1\right)-(2r-1-\xi+r(k-1))\\
&=&\xi+(r-1)\left\lfloor\frac{rk+r-2}{r-1}\right\rfloor-rk-2r+2\\
&\ge& r+ (r-1)\left(\frac{rk+r-2}{r-1}-\left\lfloor\frac{rk+r-2}{r-1}\right\rfloor\right)\\
& &+(r-1)\left\lfloor\frac{rk+r-2}{r-1}\right\rfloor-rk-2r+2\\
&=&0.
  \end{eqnarray*}
Thus, $(r-1)|L'(S)|\ge|S|$, as desired.

From the above two cases,  Claim 5 follows.

Finally, by Claim 5 and Lemma \ref{comb}, $H$ is $L$-colorable. This is a contradiction and hence completes the proof of this theorem.\end{proof}
\begin{cor}\label{detnoncc}
For any integer $r\ge 3$, if  $k$ is a multiple of $r-1$ then
$$\chi_l(K_{2r,r*(k-1)}^r)=k+1.$$
\end{cor}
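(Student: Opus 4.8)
The plan is to establish the two inequalities $\chi_l(H)\ge k+1$ and $\chi_l(H)\le k+1$ separately, where $H=K_{2r,r*(k-1)}^r$. The lower bound is immediate: Theorem~\ref{noncc} already gives $\chi_l(H)>\chi(H)=k$ under exactly the hypotheses of the corollary (namely $r\ge 3$ and $r-1\mid k$), so $\chi_l(H)\ge k+1$. All the real work therefore lies in the upper bound $\chi_l(H)\le k+1$, i.e.\ in showing that $H$ is $(k+1)$-choosable.

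For the upper bound I would exploit the fact that deleting a single vertex from the large part returns a hypergraph already known to be $k$-choosable. Let $V_1,V_2,\ldots,V_k$ be the partite sets with $|V_1|=2r$ and $|V_i|=r$ for $i\ge 2$, and pick any $v\in V_1$. The induced sub-hypergraph $H-v$ has partite sets $V_1\setminus\{v\}$ of size $2r-1$ and $V_2,\ldots,V_k$ of size $r$, so $H-v\cong K_{2r-1,r*(k-1)}^r$; by Proposition~\ref{regular} this has chromatic number $k$, and by Theorem~\ref{cc2rm1} it is $k$-choosable.

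Now given an arbitrary $(k+1)$-list assignment $L$ of $H$, I would color $v$ first: choose any $c\in L(v)$ and set $f(v)=c$. Taking $X=\{v\}$, $Y=V(H)\setminus\{v\}$ and $C=\{c\}$, the restricted lists $(L\setminus C)(u)=L(u)\setminus\{c\}$ have size at least $k$ for every $u\in Y$, so $H[Y]=H-v$ is $(L\setminus C)$-colorable by Theorem~\ref{cc2rm1}. Since $f(X)=\{c\}\subseteq C$, Lemma~\ref{comb} combines these into an $L$-coloring of $H$. Hence $H$ is $(k+1)$-choosable, giving $\chi_l(H)\le k+1$, which together with the lower bound yields $\chi_l(H)=k+1$.

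There is essentially no hard step here once the right decomposition is spotted; the only point to be careful about is that coloring $v$ with $c$ cannot create a monochromatic edge through $v$, and this is precisely guaranteed by the mechanism of Lemma~\ref{comb}: every edge meeting both $X$ and $Y$ contains $v$ (colored $c\in C$) together with some vertex of $Y$ (colored from $L\setminus C$), so those two colors differ. Thus the main thing to verify before invoking the lemmas is simply that the hypotheses line up---that $H-v$ is genuinely $K_{2r-1,r*(k-1)}^r$ so that Theorem~\ref{cc2rm1} applies, and that the divisibility condition $r-1\mid k$ and the bound $r\ge 3$ make Theorem~\ref{noncc} available for the lower bound.
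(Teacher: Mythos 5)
Your proof is correct and takes essentially the same approach as the paper: the lower bound is quoted from Theorem \ref{noncc}, and the upper bound comes from deleting one vertex of the part of size $2r$ and applying the $k$-choosability of $K_{2r-1,r*(k-1)}^r$ from Theorem \ref{cc2rm1}. The paper compresses your vertex-deletion step into the single inequality $\chi_l(K_{2r,r*(k-1)}^r)\le 1+\chi_l(K_{2r-1,r*(k-1)}^r)$, stated as ``clearly''; your argument via Lemma \ref{comb} is precisely the justification of that inequality.
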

\begin{proof}
By Theorem \ref{noncc}, $\chi_l(K_{2r,r*(k-1)}^r)>k.$ Clearly, $\chi_l(K_{2r,r*(k-1)}^r)\le 1+\chi_l(K_{2r-1,r*(k-1)}^r)$. Thus, by Theorem \ref{cc2rm1} $\chi_l(K_{2r,r*(k-1)}^r)\le k+1$. This proves the corollary.
\end{proof}
The following result gives the second generalization of $K_{3,2*(k-1)}$ for supporting our conjecture.
\begin{thm}\label{ccrp1}
$\chi_l(K_{(r+1)*(r-1),r*(k-r+1)}^r)=k$ for $r\ge 2$ and $k\ge r-1$.
\end{thm}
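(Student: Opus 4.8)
The plan is to follow the strategy of the proof of Theorem \ref{cc2rm1}, but first to strengthen the statement so that the inductive reductions stay inside a single family. Concretely, I would prove the stronger assertion that $K_{(r+1)*s,\,r*t}^r$ is $(s+t)$-choosable for every $0\le s\le r-1$ and every $t\ge 0$, and then specialize to $s=r-1$, $t=k-r+1$. Arguing by induction on $k:=s+t$, I take a minimal counterexample $H$ and invoke Lemma \ref{small} to obtain a $k$-list assignment $L$ with $(r-1)|L(V(H))|<|V(H)|=rk+r-1$ and $H$ not $L$-colorable, so that $|L(V_i)|\le M:=\lfloor\frac{rk+r-2}{r-1}\rfloor$ for every part $V_i$.

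The first block of work is structural. As in Claim 1 of Theorem \ref{cc2rm1}, if some part $V_i$ (big or small) satisfies $\bigcap_{v\in V_i}L(v)\neq\emptyset$, I colour all of $V_i$ with a common colour (harmless, since $V_i$ lies in one part and spans no edge), delete it, and apply Lemma \ref{comb} together with the induction hypothesis on the smaller family member; this contradicts minimality. Hence in $H$ every part has empty list-intersection. A multiplicity count then gives the analogue of Claim 3, namely $|L(B_j)|=M$ for each small part $B_j$ (otherwise its maximal colour multiplicity would exceed $r-1$), which forces $|L(V(H))|=M$. The crucial new fact, using $k\ge r-1$, is that every big part $A_i$ has maximum colour multiplicity exactly $r$: the empty-intersection gives $\le r$, while assuming $\le r-1$ yields $(r+1)k\le(r-1)M\le rk+r-2$, i.e. $k\le r-2$, a contradiction. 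The same count shows $A_i$ has at least $k-r+2+q$ colours of multiplicity $r$, where $q:=(k-1)\bmod(r-1)$.

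The construction then proceeds by reserving, in each big part $A_i$, a colour $c_i$ of multiplicity $r$ and colouring the $r$ vertices of $A_i$ whose lists contain $c_i$; this leaves a single uncoloured vertex in each big part. The colours $c_1,\dots,c_{r-1}$ must be pairwise distinct, since a colour used $r$ times in two different parts would create a monochromatic edge, and I would secure distinctness by a system of distinct representatives, verifying Hall's condition for the sets $D_i=\{c:\eta_{A_i}(c)=r\}$ from the lower bound $|D_i|\ge k-r+2+q$. Writing $Y$ for the remainder (the $r-1$ leftover vertices together with the small parts) and $L'=L\setminus\{c_1,\dots,c_{r-1}\}$, I would finish by showing $H[Y]$ is $L'$-colorable via Lemma \ref{ghall} and then appealing to Lemma \ref{comb}. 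The point is that a full size-$r$ class inside each big part removes $r(r-1)$ vertices while deleting only $r-1$ colours, so the Hall-deficiency carried by $V(H)$ is absorbed precisely because $q\le r-2$, which is the exact analogue of Claim 4.

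I expect the main obstacle to be the last two steps taken together: guaranteeing the distinct multiplicity-$r$ representatives $c_i$ when $k$ is small (the bound $|D_i|\ge k-r+2+q$ is weak in the range $r-1\le k\le 2r-4$, where Hall's condition for the $D_i$ must be checked by a finer count), and verifying $(r-1)|L'(S)|\ge|S|$ for every nonempty $S\subseteq Y$, not merely for $S=Y$. As in Claim 5 I would split into the case where $S$ contains no full small part (handled by $|L'(v)|\ge k-(r-1)$ and $|S\cap(V(H)\setminus\bigcup_i A_i)|\le (r-1)t$) and the case where $S$ contains some $B_j$ (handled by $|L'(B_j)|\ge M-(r-1)$ and the deficiency identity $(r-1)M=rk+r-2-q$). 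Finally, the boundary case $t=0$ (that is, $k=r-1$), where $Y$ is edgeless and the reserved colours exhaust all but one colour of the pool, needs a separate but easy direct check: each leftover vertex $v_i$ omits $c_i$, so its reduced list is exactly the single unreserved colour, and colouring all $r-1$ of them with it is proper.
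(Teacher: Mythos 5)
Your plan contains a genuine gap at exactly the step you flag as an ``obstacle'': the system of distinct representatives for the sets $D_i=\{c\colon\eta_{A_i}(c)=r\}$ over \emph{all} big parts need not exist, and no finer count of the quantities you have derived can rescue it, because the failure is realizable rather than merely unverified. Concretely, take $k=r$ with $r\ge 4$, so that $M=\lfloor\frac{rk+r-2}{r-1}\rfloor=r+2$, $q=0$, and your bound gives only $|D_i|\ge 2$. Build each big part $A_i$ on vertices $u_1,\dots,u_{r+1}$ from the pool $\{1,\dots,r+2\}$ by letting $u_1$ omit $\{1,3\}$, $u_2$ omit $\{2,3\}$, and letting $u_3,\dots,u_{r+1}$ each omit two colours from $\{4,\dots,r+2\}$ in a $2$-regular pattern. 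Then every list has size $r=k$, no colour has multiplicity $r+1$ (so $\bigcap_{v\in A_i}L(v)=\emptyset$), and $D_i=\{1,2\}$ exactly. Give all $r-1\ge 3$ big parts this same pattern, and give the one small part any admissible pattern with $|L(B_1)|=r+2$ and all multiplicities at most $r-1$. This $L$ satisfies every structural conclusion in your sketch --- $(r-1)|L(V(H))|<|V(H)|$, empty intersections, $|L(B_j)|=M$, $|D_i|\ge k-r+2+q$ --- yet Hall's condition fails for $\{D_1,\dots,D_{r-1}\}$, since three of these sets equal $\{1,2\}$. The failure is essential for your colouring scheme, not just for the SDR formalism: two big parts can never reserve the same colour (a monochromatic class meeting two parts contains an edge), so under this $L$ at most two big parts can be treated by your reservation step, and your construction of the colouring cannot even begin on the remaining ones.

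This is precisely the difficulty the paper's proof is built around, and it explains the two ways its argument differs from yours. First, the paper takes only a \emph{maximal} subfamily $I$ of big parts admitting an SDR (so $s=|I|$ may be less than $r-1$) and exploits maximality: each unrepresented big part $V_q$ satisfies $C_q\subseteq\{c_1,\dots,c_s\}$, hence $|C_q|\le s$, which yields the strong bound $(r-1)|L(V_q)|\ge (r+1)k-s$. Second, once $s<r-1$ your pure Hall finish on $Y$ is no longer available: for a set $S$ taking $r$ vertices from each leftover big part and $r-1$ from each small part, the best bound obtainable from minimum list sizes is $(r-1)|L'(S)|-|S|\ge s-(r-1)$, which is negative, and Hall can genuinely fail for such $S$. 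The paper therefore replaces Lemma \ref{ghall} by Lemma \ref{eitheror} and, for subsets avoiding the leftover vertices $v_i$, proves colourability directly: such subsets induce subgraphs of $K_{r*(k-s)}^r\subseteq K_{2r-1,r*(k-s-1)}^r$, which is $(k-s)$-choosable by Theorem \ref{cc2rm1}. Your two-parameter strengthening is a sound idea (it could replace the appeal to Theorem \ref{cc2rm1} by the induction hypothesis), and your Hall computation on $Y$ is correct \emph{when} all $r-1$ big parts are represented; but since full representation can fail, the proposal as written has no route through configurations like the one above, and the proof is incomplete without the maximality device (or some substitute for it).
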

Before proving,  we need first to show that $\chi_l(K_{(r+1)*(r-1)}^r)=r-1$. In fact, we prove the following more general result.
\begin{prop}\label{start}
$\chi_l(K_{(r+1)*k}^r)=k$  for $r\ge 2$ and $k\le r-1$.
\end{prop}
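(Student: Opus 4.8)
We must show $\chi_l(K_{(r+1)*k}^r)=k$ for $r\ge 2$ and $k\le r-1$. By Proposition \ref{regular}, the chromatic number is $k$ (each partite set has size $r+1\ge r-1$, satisfying the hypothesis), so $\chi_l\ge k$ is automatic. The content is the upper bound $\chi_l\le k$. The hypergraph $H=K_{(r+1)*k}^r$ has $k$ partite sets each of size $r+1$, hence $|V(H)|=(r+1)k$, and every $r$-subset that is not contained in a single partite set is an edge.

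**The plan.**

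The plan is to prove that $H$ is $k$-choosable directly by invoking the reduction machinery established in the lemmas. First I would fix an arbitrary $k$-list assignment $L$; by Lemma \ref{small} it suffices to treat the case $(r-1)|L(V(H))|<|V(H)|=(r+1)k$. The key structural observation I want to exploit is that $k\le r-1$ forces each partite set $V_i$ to have size $r+1$ while only $k\le r-1$ colors appear per vertex, and more importantly that any set of $r-1$ or fewer vertices can never form an edge within a partite set because edges within a partite set simply do not exist. I would then aim to verify the Hall-type condition $(r-1)|L(X)|\ge|X|$ for every nonempty $X\subseteq V(H)$ and conclude via Lemma \ref{ghall} that $H$ is $L$-colorable. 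The crux is estimating $|L(X)|$ from below: since $(r-1)|L(V(H))|<(r+1)k$ gives $|L(V(H))|\le\frac{(r+1)k-1}{r-1}$, the total number of colors is small, and I would argue that with so few colors available relative to the generous factor $r-1$, the inequality $(r-1)|L(X)|\ge|X|$ can only fail for small or highly concentrated $X$; such failing sets $X$ would have $(r-1)|L(X)|<|X|$, and for those I would instead show $H[X]$ is $L$-colorable and apply Lemma \ref{eitheror}.

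**Main obstacle.**

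The hard part will be handling the sets $X$ on which the Hall condition fails, i.e. where the colors are too concentrated. For such an $X$ I expect to argue that $H[X]$ is $L$-colorable by a counting/induction argument analogous to the one in Theorem \ref{cc2rm1}: find a color $\bar c$ of high multiplicity in some partite set, color the $r$ or fewer vertices carrying it (which cannot form a monochromatic edge since $|V_i|=r+1$ and all vertices sharing $\bar c$ within one part number at most $r$, while a cross-part monochromatic $r$-set is forbidden), delete $\bar c$, and recurse on a smaller configuration. The delicate point is ensuring that after removing $\bar c$ every remaining vertex still has enough colors and that the leftover vertices in each part number at most $r-1<r$, so no edge is monochromatic; the constraint $k\le r-1$ is exactly what makes the per-part slack large enough for this to go through. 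I would verify the base cases $k=1$ (no edges, trivially $1$-choosable) and small $r$ separately, then complete the induction on $k$.
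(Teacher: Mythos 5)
Your overall frame is circular, and the sketch meant to fill it in glosses over exactly the step that makes this proposition hard. After the reduction of Lemma \ref{small} you are working with a $k$-list assignment $L$ satisfying $(r-1)|L(V(H))|<|V(H)|$; hence the Hall condition of Lemma \ref{ghall} already fails for $X=V(H)$ itself, so Lemma \ref{ghall} can never be applied directly to $H$, and your claim that the condition ``can only fail for small or highly concentrated $X$'' is false --- it fails at the largest possible set. Worse, applying Lemma \ref{eitheror} to $H$ with the list $L$ would require you to show that $H[X]$ is $L$-colorable for \emph{every} failing set $X$, including $X=V(H)$, which is precisely the statement to be proved. So all of the content of your proof lives in the recursion sketched in your last paragraph, and that recursion does not go through as described: if the high-multiplicity color $\bar c$ appears in exactly $r$ of the $r+1$ lists of a part $V_i$ (the case that cannot be avoided once no part has a common color), then after coloring those $r$ vertices with $\bar c$ and deleting $\bar c$ elsewhere, the remaining structure is \emph{not} a smaller instance $K_{(r+1)*(k-1)}^r$ with a $(k-1)$-list assignment: a stray vertex $v_i$ of $V_i$ survives, so there is no ``smaller configuration'' of the same form to recurse on.

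The paper resolves this with machinery your sketch lacks. First, the clean case: if some part has a color common to all $r+1$ of its lists, color that whole part with it and recurse by induction on $k$ (no stray vertex is left). Otherwise every color has multiplicity at most $r$ in every part, and counting against $(r-1)|L(V(H))|<(r+1)k$ shows each part $V_i$ has a nonempty set $C_i$ of colors of multiplicity exactly $r$. The paper then takes a \emph{maximal} subfamily of $\{C_1,\ldots,C_k\}$ admitting a system of distinct representatives $(c_1,\ldots,c_s)$, colors $V_i\setminus\{v_i\}$ with $c_i$ for all $i\le s$ simultaneously (where $v_i$ is the unique vertex of $V_i$ missing $c_i$), and only then applies Lemma \ref{eitheror} --- not to $H$, but to the residual hypergraph $H'=H[\{v_1,\ldots,v_s\}\cup V_{s+1}\cup\cdots\cup V_k]$ with lists $L'=L\setminus\{c_1,\ldots,c_s\}$, via a three-case analysis. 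The maximality of the SDR family is what gives $|C_p|\le s$ for the untouched parts, which is needed in the hardest case, and the hypothesis $k\le r-1$ enters through the single inequality $(r-1)(k-s+1)\ge r(k-s)+s$. Your proposal contains the germ of the first step (color a heavy color inside a part, delete, induct on $k$) but none of the SDR argument or the residual Hall-type verification, and without them the induction cannot close.
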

\begin{proof}
If  $r=2$ then $k=1$ and the assertion  trivially holds. We may assume that $r\ge 3$. We prove the proposition by induction on $k$. Since $\chi_l(K_{(r+1)*k}^r)\ge\chi(K_{(r+1)*k}^r)=k$, it suffices to show that $K_{(r+1)*k}^r$ is $k$-choosable. If $k=1$ then $K_{(r+1)*k}^r$ contains no edges and hence is $1$-choosable. Let $1<k\le r-1$ and assume that $K_{(r+1)*t}^r$ is $t$-choosable for any $t<k$. For simplicity, let $H=K_{(r+1)*k}^r$ and let  $V_1,V_2,\ldots,V_{k}$ be the $k$ partite sets of $H$. We need to show that $H$ is $k$-choosable.

Let $L$ be any $k$-list assignment of $H$ such that
 \begin{equation}\label{basicinequ}
 (r-1)|L(V(H))|<|V(H)|=(r+1)k.
  \end{equation}
 By Lemma \ref{small}, to show that $H$ is $k$-choosable, it suffices to show that $H$ is $L$-colorable. If there is some $V_i$ such that all vertices in $V_i$ have a common color $c^*$ in their lists, then we can color each vertex in $V_i$ by $c^*$ and remove $c^*$ from the lists of all other vertices in $H$. Using induction on $k$ and Lemma \ref{comb}, one can easily verify that $H$ is $L$-colorable.

In the following, we assume that $\bigcap_{v\in V_i}L(v)=\emptyset$ for any $i\in\{1,2,\ldots,k\}$. As $|V_i|=r+1$ we have $\eta_{V_i}(c)\le r$ for each $c\in L(V_i)$. For  each $i\in\{1,2,\ldots,k\}$, let $C_i=\{c\in L(V_i)\colon\,\eta_{V_i}(c)=r\}$.  Thus, for each color $c\in L(V_i)\setminus C_i$, we have $\eta_{V_i}(c)\le r-1$ and hence,
\begin{equation}\label{Cibound}
r|C_i| +(r-1)(|L(V_i)|-|C_i|)\ge \sum_{v\in V_i}|L(v)|=(r+1)k
\end{equation}
for each $i\in\{1,2,\ldots,k\}$. Equivalently, $|C_i|\ge (r+1)k-(r-1)|L(V_i)|$. Since $|L(V_i)|\le |L(V(H))|$, we have $|C_i|>0$ by (\ref{basicinequ}).

Let $I$ be a maximal subset of $\{1,2,\ldots,k\}$ such that $\{C_i\colon\,i\in I\}$ has a system of distinct representatives and let $s=|I|$. Since $C_i$ is nonempty, $s\ge 1$. With no loss of generality, we may assume that $I=\{1,2,\ldots,s\}$. Let $(c_1,c_2,\ldots,c_s)$ be a system of distinct representatives of $(C_1,C_2,\ldots,C_s)$. Notice that $\eta_{V_i}(c_i)=r$ and $|V_i|=r+1$. For each $i\in \{1,2,\ldots,s\}$, let $v_i$ be the only vertex in $V_i$ such that $c_i\not\in L(v_i)$. Let $H'=H[\{v_1,\ldots,v_s\}\cup V_{s+1}\cup\cdots \cup V_{k}]$ and define a list assignment $L'$ on the hypergraph $H'$ by $L'(v)=L(v)\setminus \{c_1,\ldots,c_s\}$ for any $v\in V(H')$. For each $i\in \{1,2,\ldots,s\}$, we use $c_i$ to color all vertices in $V_i$ except $v_i$. By Lemma \ref{comb}, to show that $H$ is $L$-colorable, it suffices to show that $H'$ is $L'$-colorable.

For each $i\in\{1,2,\ldots,s\}$, as $s\le k$ and $c_i\not\in L(v_i)$, we have $|L'(v_i)|\ge L(v_i)-(s-1)=k-(s-1)\ge 1$. If $s=k$ then $|V(H')|=k<r$ and hence $H'$ contains no edges. In this case, $H'$ is trivially $L'$-colorable. Thus, we assume that $s\le k-1$. For each $p\in \{s+1,s+2,\ldots,k\}$, by the maximality of $I$, we have
$C_p\subseteq\{c_1,c_2,\ldots,c_s\}$ and hence $|C_p|\le s$.

Let $S$ be an arbitrary subset of $V(H')$. We consider three cases:

\noindent\emph{Case 1}: $v_i\not\in S$ for any $i\in \{1,2,\ldots,s\}$.

In this case, $H'[S]$ is an induced subgraph of $K_{(r+1)*(k-s)}^r$. Further, by the induction hypothesis, $K_{(r+1)*(k-s)}^r$ is $(k-s)$-choosable. Therefore, $H'[S]$ is $(k-s)$-choosable.  As $|L'(v)|\ge |L(v)|-s=k-s$ for each $v\in S$, $H'[S]$ is $L'$-colorable.

\noindent\emph{Case 2}: $v_i\in S$ for some $i\in \{1,2,\ldots,s\}$ and $V_p\not \subseteq S$ for any $p\in\{s+1,$ $s+2,\ldots,k\}$.

In this case, $|S|\le r(k-s)+s$. As $|L'(v_i)|\ge k-s+1$ and $k\le r-1$, we have
$$(r-1)|L'(S)|-|S|\ge (r-1)(k-s+1)-(r(k-s)+s)=r-1-k\ge 0,$$
that is, $(r-1)|L'(S)|\ge|S|$.

\noindent\emph{Case 3}: $v_i\in S$ for some $i\in \{1,2,\ldots,s\}$ and $V_p \subseteq S$ for some $p\in \{s+1,$ $s+2,\ldots,k\}$.

By (\ref{Cibound}), we have $(r-1)|L(V_p)|\ge (r+1)k-|C_p|$ and hence $(r-1)|L(V_p)|\ge (r+1)k-s$ as $|C_p|\le s$. Therefore,
$$(r-1)|L'(S)|\ge (r-1)|L'(V_p)|\ge (r-1)(|L(V_p)|-s)\ge (r+1)k-rs.$$
On the other hand, $|S|\le |V(H')|=(r+1)k-rs$. Thus, $(r-1)|L'(S)|\ge |S|$.

By the above three cases, for any $S\subseteq V(H')$, either $(r-1)|L'(S)|\ge|S|$ or $H'[S]$ is $L'$-colorable. It follows from Lemma \ref{eitheror} that $H'$ is $L'$-colorable. Thus, $H$ is $L$-colorable and hence $k$-choosable. This proves the proposition by induction.
\end{proof}

\noindent\emph{Proof of Theorem \ref{ccrp1}}.  We prove the theorem by induction on $k$. If  $k= r-1$ the the assertion holds by Proposition \ref{start}.  Now let $k\ge r$ and assume that $K^r_{(r+1)*(r-1),r*(k-r)}$ is $(k-1)$-choosable. We are going to show that $K^r_{(r+1)*(r-1),r*(k-r+1)}$ is $k$-choosable. Write $H=K^r_{(r+1)*(r-1),r*(k-r+1)}$ and let  $V_1,V_2,\ldots,V_{k}$ be the partite sets of $H$ with $|V_i|=r+1$ for $i\in\{1,2,\ldots,r-1\}$ and $|V_i|=r$ for $i\in\{r,r+1,\ldots,k\}$.

Let $L$ be any $k$-list assignment of $H$ such that
\begin{equation}\label{basicinequ2}
(r-1)|L(V(H))|<|V(H)|=rk+r-1.
\end{equation}
By Lemma \ref{small}, it suffices to show that $H$ is $L$-colorable.

For some $i\in \{1,2,\ldots,k\}$, if  all vertices in $V_i$ have a common color, say $c^*$, in their lists, then we can color each vertex in $V_i$ by $c^*$. Let $H'$ be the subgraph of $H$ induced by $V(H)\setminus V_i$. That is, $H'=K_{(r+1)*(r-2),r*(k-r+1)}^r$ if $i\leq r-1$ or $K_{(r+1)*(r-1),r*(k-r)}^r$ if $i>r-1$, both of which are subgraphs of $K_{(r+1)*(r-1),r*(k-r)}^r$. Further, by the induction hypothesis, $K_{(r+1)*(r-1),r*(k-r)}^r$ is $(k-1)$-choosable and so is $H'$. Let $L'$ be the list assignment of $H'$ defined by $L'(v)=L(v)\setminus \{c^*\}$ for each $v\in V(H')$. Then $|L'(v)|\ge k-1$ and hence $H'$ is $L'$-colorable. Thus, $H$ is $L$-colorable by Lemma \ref{comb}.

We now assume  that $\bigcap_{v\in V_i}L(v)=\emptyset$ for each $i\in \{1,2,\ldots,k\}$. The following discussion is much similar to the proof of Proposition \ref{start}. For each $i\in \{1,2,\ldots,r-1\}$  let $C_i=\{c\in L(V_i)\colon\,\eta_{V_i}(c)=r\}$.  Then (\ref{Cibound}) holds for each $i\in\{1,2,\ldots,r-1\}$ and, therefore, $|C_i|\ge (r+1)k-(r-1)|L(V_i)|$. Since $|L(V_i)|\leq |L(V(H))|$ and $k\ge r$, it follows by (\ref{basicinequ2}) that $|C_i|> 1.$

Let $I$ be a maximal subset of $\{1,2,\ldots,r-1\}$ such that $\{C_i\colon\,i\in I\}$ has a system of distinct representatives, and let $s=|I|$. It is clear that $1\leq s\leq r-1$ as  $C_i\neq\emptyset$ for each $i\in \{1,2,\ldots,r-1\}$. With no loss of generality, we assume that $I=\{1,2,\ldots,s\}$ and $(c_1,c_2,\ldots,c_s)$ is a system of distinct representatives of $(C_1,C_2,\ldots,C_s)$. For each $i\in\{1,2,\ldots,s\}$, let $v_i$ be the only vertex of $V_i$ such that $c_i\not\in L(v_i)$. Let $H'=H[\{v_1,\ldots,v_s\}\cup V_{s+1}\cup\cdots\cup V_{k}]$ and define $L'(v)=L(v)\setminus \{c_1,c_2\ldots,c_s\}$ for every $v\in V(H')$. It suffices to show that $H'$ is $L'$-colorable by Lemma \ref{comb}.

  For each $i\in \{1,2,\ldots,s\}$, since $c_i\not\in L(v_i)$, we have
  \begin{equation}\label{singbound}
 |L'(v_i)|\ge |L(v_i)|-(s-1)= k-s+1.
 \end{equation}
 For each $p\in \{r,r+1,\ldots,k\}$, since $\bigcap_{v\in V_p}L(v)=\emptyset$,  each color of $L(V_p)$ appears at most $r-1$ times in $V_p$. Therefore,
 \begin{equation}\label{lvp}
|L(V_p)|\ge \frac{\sum_{v\in V_p}|L(v)|}{r-1}=\frac{rk}{r-1}.
\end{equation}
As $|L'(V_p)|\ge (|L(V_p)|-s)$, (\ref{lvp}) implies
\begin{equation}\label{rpartbound}
(r-1)|L'(V_p)|\ge rk-(r-1)s.
\end{equation}
If $s<r-1$, then for each $q\in\{s+1,s+2,\cdots,r-1\}$, we have $C_q\subseteq \{c_1,\ldots,c_s\}$ by the maximality of $I$. Thus $|C_q|\le s$. It follows from (\ref{Cibound}) (regard $i$ as $q$) that $(r-1)|L(V_q)|\ge (r+1)k-|C_q|\ge (r+1)k-s$. Thus,
\begin{equation}\label{midbound}
(r-1)|L'(V_q)|\ge (r-1)(|L(V_q)|-s)\geq (r+1)k-rs.
\end{equation}

Let $S$ be an arbitrary subset of $V(H')$. We will show that either $H'[S]$ is $L'$-colorable or $(r-1)|L(S)|\ge |S|$.

First assume that $s<r-1$ and $V_q\subseteq S$ for some $q\in \{s+1,s+2,\ldots,r-1\}$. Note that $|S|\le |V(H')|=rk+(r-1)-rs$, $|L'(S)|\ge |L'(V_q)|$ and $k\ge r$. It follows from (\ref{midbound}) that
\begin{equation}\label{midbound2}
(r-1)|L'(S)|\ge (r+1)k-rs\ge rk+r-rs> |S|,
\end{equation}
as desired. In the following, we always assume that $V_q\not\subseteq S$ for any $q\in \{s+1,s+2,\ldots,r-1\}$, unless $s=r-1$. Under this assumption, we have $|S\cap V_i|\le r$ for all $i\in \{s+1,s+2,\ldots,k\}$. We consider three cases:

\noindent\emph{Case 1}: $v_i\not\in S$ for any $i\in\{ 1,2,\ldots,s\}$.

In this case, $H'[S]$ is an induced subgraph of $K_{r*(k-s)}^r$ and hence of $K_{2r-1,r*(k-s-1)}^r$. Thus, $H'[S]$ is $(k-s)$-choosable by Theorem \ref{cc2rm1}. Since $|L'(v)|\ge |L(v)|-s= k-s$ for each $v\in S$, $H'[S]$ is $L'$-colorable, as desired.

\noindent\emph{Case 2}: $v_i\in S$ for some $i\in\{ 1,2,\ldots,s\}$ and $V_p\not\subseteq S$ for any $p\in\{r,$ $r+1,\ldots,k\}$.

Combining with our assumption that $V_q\not\subseteq S$ for $q\in\{s+1,s+2,\ldots,r-1\}$, we have
$V_j\not\subseteq S$ for any $j\in \{s+1,s+2,\ldots,k\}$. Thus,
 $$S\le |V(H')|-(k-s)=(rk+(r-1)-rs)-(k-s)=(r-1)(k+1-s).$$
 As $v_i\in S$, we have $|L'(S)|\ge |L'(v_i)|$, implying that $|L'(S)|\ge k+1-s$ by (\ref{singbound}). Thus, $ (r-1)|L'(S)|\ge|S|$.

 \noindent\emph{Case 3}: $v_i\in S$ for some $i\in\{ 1,2,\ldots,s\}$ and $V_p\subseteq S$ for some $p\in\{r,$ $r+1,\ldots,k\}$.

 In this case, again by our assumption that $V_p\not\subseteq S$ for any $p\in\{s+1,$\ \  $s+2,\ldots,r-1\}$, we have $|S|\le |V(H')|-(r-1-s)=(rk+$ $(r-1)-rs)-(r-1-s)=rk-(r-1)s$. Since $V_p\subseteq S$, so by  (\ref{rpartbound}) we have $(r-1)|L'(S)|\ge (r-1)|L'(V_p)|\ge rk-(r-1)s\geq |S|$.

 By the above three cases, for any $S\subseteq V(H')$, either $(r-1)|L'(S)|\ge |S|$ or $H'[S]$ is $L'$-colorable. Therefore, $H'$ is $L'$-colorable by Lemma \ref{eitheror}. This completes the proof of Theorem \ref{ccrp1}.
\begin{cor}\label{detnoncc2}
$\chi_l(K_{(r+1)*r}^r)=r+1$ for  $r\ge 2$.
\end{cor}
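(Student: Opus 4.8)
The plan is to sandwich $\chi_l(K_{(r+1)*r}^r)$ between $r+1$ and $r+1$, mirroring the proof of Corollary \ref{detnoncc} but now for the second family. For the lower bound I would simply invoke Theorem \ref{noncc2}, which gives $\chi_l(K_{(r+1)*r}^r) > \chi(K_{(r+1)*r}^r) = r$; since the choice number is an integer, this yields $\chi_l(K_{(r+1)*r}^r) \ge r+1$.

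For the upper bound, the key observation is that $K_{(r+1)*r}^r$ is obtained from $K_{(r+1)*(r-1),r}^r$ by adjoining a single vertex $v$ to the unique partite set of size $r$, enlarging it to size $r+1$. I would then use the elementary fact that deleting one vertex lowers the choice number by at most one, namely $\chi_l(K_{(r+1)*r}^r) \le 1 + \chi_l(K_{(r+1)*(r-1),r}^r)$. This follows directly from Lemma \ref{comb}: given any $\bigl(\chi_l(K_{(r+1)*(r-1),r}^r)+1\bigr)$-list assignment $L$, color $v$ with an arbitrary $c \in L(v)$, put $C = \{c\}$ and $Y = V \setminus \{v\}$, and note that the lists $(L\setminus C)$ on $Y$ still have size at least $\chi_l(K_{(r+1)*(r-1),r}^r)$, so $K_{(r+1)*(r-1),r}^r$ is $(L\setminus C)$-colorable. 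Because $r \ge 2$, every edge through $v$ meets a vertex colored with something other than $c$, so Lemma \ref{comb} assembles a full $L$-coloring.

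Finally I would identify $K_{(r+1)*(r-1),r}^r$ as the instance $K_{(r+1)*(r-1),r*(k-r+1)}^r$ with $k = r$, so that the trailing block $r*(k-r+1)$ becomes a single part $r*1 = r$; the hypotheses $r \ge 2$ and $k = r \ge r-1$ of Theorem \ref{ccrp1} then hold. Hence $\chi_l(K_{(r+1)*(r-1),r}^r) = r$, and combining with the vertex-deletion inequality gives $\chi_l(K_{(r+1)*r}^r) \le r+1$, matching the lower bound and proving the equality.

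I expect no real obstacle: the statement is the analogue of Corollary \ref{detnoncc} for the second supporting family, and the whole argument reduces to matching the subscript pattern to Theorem \ref{ccrp1} and reusing the vertex-deletion bound. The only point warranting a moment of care is confirming that adjoining one vertex to the size-$r$ part of $K_{(r+1)*(r-1),r}^r$ genuinely produces $K_{(r+1)*r}^r$, i.e.\ that enlarging a part preserves the $r$-complete multipartite structure, which is immediate from the definition of $K_{p_1,\dots,p_k}^r$.
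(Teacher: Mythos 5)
Your proposal is correct and follows essentially the same route as the paper: lower bound from Theorem \ref{noncc2}, upper bound from Theorem \ref{ccrp1} at $k=r$ together with the one-vertex-deletion inequality $\chi_l(K_{(r+1)*r}^r)\le 1+\chi_l(K_{(r+1)*(r-1),r}^r)$, which the paper invokes implicitly (exactly as it does in Corollary \ref{detnoncc}). Your extra step of justifying that inequality via Lemma \ref{comb} only makes explicit what the paper treats as clear.
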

\begin{proof}
By Theorem \ref{noncc2}, $\chi_l(K_{(r+1)*r}^r)\ge r+1.$ On the other hand, using Theorem \ref{ccrp1} for $k=r$, we have $\chi_l(K_{(r+1)*(r-1),r}^r)=r.$ Thus
$\chi_l(K_{(r+1)*r}^r)\le r+1.$ This proves the corollary.
\end{proof}
 
\end{document}